\documentclass{amsart}

\usepackage[T1]{fontenc}
\usepackage[utf8]{inputenc}
\usepackage[USenglish]{babel}
\usepackage{textcase}

\usepackage[
	bookmarks=true,
	plainpages=false,
	linktocpage,
	colorlinks=true,
	citecolor=green!80!black,
	linkcolor=red!70!black,
	filecolor=magenta,
	urlcolor=magenta,
	breaklinks,
	pdfauthor={Martin Winter},
]{hyperref}

\usepackage{amsmath,amsthm}
\usepackage{calc, mathtools} 

\iftrue
\usepackage{amssymb} 
\else
\usepackage[charter,cal=cmcal]{mathdesign}
\fi

\usepackage{kbordermatrix}

\usepackage{colortbl,color} 
\usepackage[dvipsnames]{xcolor}
\usepackage{centernot} 
\usepackage{array} 
\usepackage{enumitem,moreenum} 
\usepackage[noadjust]{cite} 
\usepackage[nameinlink,capitalize,noabbrev]{cleveref}
\usepackage{nicefrac}

\usepackage{tikz-cd} 

\usepackage[font=small,labelfont=bf]{caption}

\usepackage{blkarray}

\usepackage{inconsolata}

\usepackage[
    colorinlistoftodos,
    backgroundcolor=yellow,
    textsize=footnotesize, 
]{todonotes}

\usepackage{wrapfig} 

\usepackage[normalem]{ulem} 
\newcommand{\mathsout}[1]{\ensuremath{\text{\sout{$#1$}}}}

\newcommand{\RR}{\mathbb{R}}    
                   
\newcommand{\Tsymb}{\top}
\newcommand{\T}{^{\Tsymb}}

\def\^#1{^{(#1)}}
\def\s^#1{^{\smash{(#1)}}}

\def\:{\colon}

\def\nlspace{\nolinebreak\space}
\def\nls{\nlspace}

\newcommand{\cupdot}{\mathbin{\mathaccent\cdot\cup}}
\newcommand{\precdot}{\mathrel{\prec\kern-0.9ex\cdot}}
\newcommand{\ldot}{\mathrel{<\kern-0.5ex\mathllap{\cdot}}}
\newcommand{\sldot}{\mathrel{<\kern-0.5ex\cdot\kern0.2ex}}
\newcommand{\gdot}{\mathrel{\cdot\kern-1.3ex>}}

\newcommand{\labelstyle}[1]{\upshape(\textit{#1})}
\newcommand{\mylabel}{\labelstyle{\roman*}}
\newenvironment{myenumerate}{\begin{enumerate}[label=\mylabel]}{\end{enumerate}}

\def\itm#1{{\labelstyle{\romannumeral#1\relax}}}

\newcommand{\freespace}{\kern.07em}

\newcommand{\ul}[1]{\underline{\smash{#1}}}

\newcommand{\bs}[1]{\boldsymbol{#1}}
\newcommand{\bsdot}[1]{\dot{\boldsymbol{#1}}}
\newcommand{\dotbs}[1]{\dot{\boldsymbol{#1}}}

\definecolor{NiceBlue}{rgb}{0.15, 0.2, 0.75}
\newcommand{\Def}[1]{\textcolor{NiceBlue!90}{\textit{#1}}}

\newcommand{\TODO}{{\footnotesize\textcolor{red}{TODO}}}

\makeatletter
\newcommand{\subalign}[1]{%
  \vcenter{%
    \Let@ \restore@math@cr \default@tag
    \baselineskip\fontdimen10 \scriptfont\tw@
    \advance\baselineskip\fontdimen12 \scriptfont\tw@
    \lineskip\thr@@\fontdimen8 \scriptfont\thr@@
    \lineskiplimit\lineskip
    \ialign{\hfil$\m@th\scriptstyle##$&$\m@th\scriptstyle{}##$\hfil\crcr
      #1\crcr
    }%
  }%
}
\makeatother

\newtheoremstyle{mythmstyle} 
    {\parsep}                    
    {\parsep}                    
    {\itshape}                   
    {}                           
    {\bfseries\scshape}          
    {.}                          
    {.5em}                       
    {}  
\newtheoremstyle{mydefstyle} 
    {\parsep}                    
    {\parsep}                    
    {}                   
    {}                           
    {\mdseries\scshape}          
    {.}                          
    {.5em}                       
    {}  

\numberwithin{equation}{section}

\theoremstyle{theorem}
\newtheorem{theorem}{Theorem}[section]
\newtheorem{corollary}[theorem]{Corollary}
\newtheorem{lemma}[theorem]{Lemma}

\newtheorem{conjecture}[theorem]{Conjecture}

\theoremstyle{definition}

\newtheorem{remark}[theorem]{Remark}

\crefname{theorem}{Theorem}{Theorems}
\crefname{proposition}{Proposition}{Propositions}
\crefname{lemma}{Lemma}{Lemmas}
\crefname{corollary}{Corollary}{Corollaries}
\crefname{remark}{Remark}{Remarks}
\crefname{example}{Example}{Examples}
\crefname{definition}{Definition}{Definitions}
\crefname{problem}{Problem}{Problems}
\crefname{observation}{Observation}{Observation}
\crefname{construction}{Construction}{Construction}

\theoremstyle{theorem}
\newtheorem{innercustomgeneric}{\customgenericname}
\providecommand{\customgenericname}{}
\newcommand{\newcustomtheorem}[2]{%
  \newenvironment{#1}[1]
  {%
   \renewcommand\customgenericname{#2}%
   \renewcommand\theinnercustomgeneric{##1}%
   \innercustomgeneric
  }
  {\endinnercustomgeneric}
}
\newcustomtheorem{theoremX}{Theorem}
\newcustomtheorem{lemmaX}{Lemma}
\newcustomtheorem{conjectureX}{Conjecture}

\DeclareMathOperator{\sign}{sign}

\DeclareMathOperator{\conv}{conv}

\DeclareMathOperator{\Span}{span}
\DeclareMathOperator{\im}{im}

\DeclareMathOperator{\tr}{tr}

\DeclareMathOperator{\Id}{Id}

\DeclareMathOperator{\vol}{vol}  	
\DeclareMathOperator{\Int}{int}

\newcommand{\ball}[3]{B_{#2}^{#1}\kern-1pt(#3)}

\let\<=\langle
\let\>=\rangle
\let\x=\times

\def\...{...}
\newcommand{\shortStyle}{\textit}
\newcommand{\ie}{\shortStyle{i.e.,}}

\newcommand{\eg}{\shortStyle{e.g.}}

\newcommand{\wrt}{\shortStyle{w.r.t.}}

\newcommand{\cf}{\shortStyle{cf.}}

\newcommand{\resp}{resp.}

\let\angle=\measuredangle

\makeatletter
\renewcommand*{\eqref}[1]{%
  \hyperref[{#1}]{\textup{\tagform@{\ref*{#1}}}}%
}
\makeatother

\newcommand{\tempnewpage}{}

\newcommand{\mcolor}{OrangeRed}
\newcommand{\msays}[1]{{\footnotesize\textcolor{\mcolor}{\textbf{M:} #1}}}
\newcommand{\WOmega}{\Omega^{\text{\normalfont\textsc w}}}
\newcommand{\Wstress}{\omega^{\text{\normalfont\textsc w}}}
\newcommand{\bsWstress}{{\bs\omega}^{\text{\normalfont\textsc w}}}

\DeclareMathOperator{\real}{\textsc{real}}

\newcommand{\hh}[2]{h_{\sigma_{#1}}^{\sigma_{#2}}}
\newcommand{\hhdot}[2]{{\dot h}_{\sigma_{#1}}^{\sigma_{#2}}}
\newcommand{\HH}[2]{{\vec h}_{\sigma_{#1}}^{\sigma_{#2}}}

\newcommand{\nn}[2]{n_{\sigma_{#1}}^{\sigma_{#2}}}

\renewcommand{\S}{\mathbb{S}}
\renewcommand{\H}{\mathbb{H}}

\newcommand{\triplestack}[3]{
    \kern-1pt
    \begin{smallmatrix*}[l]
        #1
        \\[-0.2ex]
        #2
        \\[-0.2ex]
        #3
    \end{smallmatrix*}
}
\newcommand{\triplestacktiny}[3]{
    \kern-1pt
    \begin{smallmatrix*}[l]
        \scriptscriptstyle #1
        \\[-0.3ex]
        \scriptscriptstyle #2
        \\[-0.3ex]
        \scriptscriptstyle #3
    \end{smallmatrix*}
}

\makeatletter
\newcommand{\addresseshere}{%
  \enddoc@text\let\enddoc@text\relax
}
\makeatother

\begin{document}


\title%
[Second-order rigidity of coned polytope frameworks]%
{Second-order rigidity of coned polytope frameworks and the stress-flex conjecture from a vector-valued Schläfli formula} 
		
\author[E.\ Pachyli]{Eleni Pachyli}
\address{{TU Wien, Wiedner Hauptstrasse 8-10/104 1040 Vienna, Austria}}
\email{eleni.pachyli@tuwien.ac.at}

\author[R.\ Prosanov]{Roman Prosanov}
\address{{Autonomous University of Barcelona, Department of Mathematics, Building C Science Faculty, 08193 Bellaterra, Spain}}
\email{roman.prosanov@univie.ac.at}

\author[M.\ Winter]{Martin Winter}
\address{Max-Planck Institute for Mathematics in the Sciences, Inselstraße 22, 04103 Leipzig, Gernany}
\email{martin.winter@mis.mpg.de}
	
\subjclass[2010]{51M20, 52C25, 52B11, 52A38}

\keywords{Convex polytopes, bar-joint frameworks, second-order rigidity, prestress stability, Schläfli formula}
		
\date{\today}
\begin{abstract}
A \emph{coned polytope framework} (CPF) is the bar-joint framework~obtained from the 1-skeleton of a convex polytope by coning over some interior point.
It was recently shown that CPFs are rigid, though the exact order of rigidity remained open.
In this paper we introduce the Wachspress stress and use it to show that CPFs are prestress stable, in particular, second-order rigid.
To this end, we resolve the stress-flex conjecture in the case of the Wachspress stress by identifying its dual formulation as a corollary of a vector-valued Schläfli-type formula introduced by Schlenker and Souam.
We give a new and purely discrete-geometric proof of this generalized Schläfli formula.
\end{abstract}

\vspace*{-2em}
\maketitle




\vspace*{-1ex}
\section{Introduction}


A \emph{coned polytope framework} (or \emph{CPF}) is a bar-joint framework constructed from the 1-skeleton of a convex polytope $P\subset\RR^d$ by inserting additional bars 
between the vertices of $P$ and some interior point $p_\star\in\Int(P)$.
It was recently proven~that CPFs are rigid \cite[Theorem 4.5]{winter2024rigidity}. 
This means that any sufficiently small perturbation of the 1-skeleton of $P$~that~preserves the edge lengths and the distances between the vertices and $p_\star$, must be congruent to $P$.
This result is surprising~since polytope~ske\-leta can be quite sparse.
It is also somewhat unsatisfying as the proof~gives~no~indication of the actual order of rigidity.
In fact, many CPFs are not first-order rigid, which means, they admit differentiable deformations that preserve the bar lengths at least in first order. 
Such a first-order flex is shown in \cref{fig:pointed_polytope_flex}.
%
The analogous question for second-order rigidity remained open \cite[Question 5.2.]{winter2024rigidity}.
In this article we show that second-order rigidity can be derived from a generalization of the Schläfli formula applied to the dual polytope.

\vspace*{1ex}
\begin{figure}[h!]
    \centering
    \includegraphics[width=0.30\linewidth]{img/pointed_polytope_flex.pdf}
    \caption{A coned polytope framework of the standard cube together with a first-order flex that (infinitesimally) twists the top square.}
    \label{fig:pointed_polytope_flex}
\end{figure}

%
\begin{theorem}
    \label{res:prestress_stable}
    Coned polytope frameworks are prestress stable, and hence, second-order rigid.
\end{theorem}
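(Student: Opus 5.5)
We recall that a framework is prestress stable if there is a self-stress $\omega$ such that the associated stress energy form $\Omega(\dot p)=\sum_{ij}\omega_{ij}\|\dot p_i-\dot p_j\|^2$ is positive definite on every first-order flex that is not trivial (i.e.\ not the restriction of an infinitesimal isometry). Prestress stability implies second-order rigidity by the standard result of Connelly--Whiteley, so it suffices to exhibit such a stress. Our plan is to construct an explicit stress, the \emph{Wachspress stress} $\Wstress$, on the coned framework and then prove positivity of $\WOmega$ on non-trivial flexes.

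\emph{Step 1: construction of the stress.} We place $p_\star$ at the origin and consider the polar dual $P^\circ$, whose facets $\sigma_i$ correspond to vertices $p_i$ of $P$. For an edge $ij$ of $P$ (a ridge of $P^\circ$) we set $\Wstress_{ij}$ proportional to the $(d-2)$-volume of the ridge $\sigma_i\cap\sigma_j$ divided by $\|p_i\|\|p_j\|\sin\angle(p_i,p_j)$, so that it is essentially $\vol(\sigma_i\cap\sigma_j)/\|p_i\times p_j\|$, which is positive. The cone bars $0p_i$ receive the compensating negative weights $\Wstress_{\star i}$ needed for equilibrium. Equilibrium at each $p_i$ in the directions tangent to the facet $\sigma_i$ is exactly the Minkowski relation $\sum_j \vol(\sigma_i\cap\sigma_j)\, n_{ij}=0$ for the facet $\sigma_i$ of $P^\circ$; the radial component is absorbed by $\Wstress_{\star i}$, and equilibrium at $p_\star$ follows by summation (again a Minkowski-type identity for $P^\circ$). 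This is the polytopal analogue of Wachspress coordinates.

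\emph{Step 2: dualize the flexes.} A first-order flex $\dot p$ preserves $\|p_i\|$ and $\|p_i-p_j\|$ in first order, so $\langle p_i,\dot p_i\rangle=0$ and hence $\langle p_j,\dot p_i\rangle+\langle p_i,\dot p_j\rangle=0$. Under polarity this is a first-order deformation of $P^\circ$ in which every facet keeps its distance to the origin, i.e.\ the facet normals $p_i/\|p_i\|^2$ rotate and the dihedral angles change in a prescribed way while the facet supports remain tangent to fixed spheres. The expression $\WOmega(\dot p)$ then rewrites as $\sum_{ij}\vol(\sigma_i\cap\sigma_j)\cdot(\text{second variation of a vector-valued angle term})$. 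We would apply the vector-valued Schläfli formula of Schlenker--Souam, $\sum_{ij}\vol(\sigma_i\cap\sigma_j)\,\dot\theta_{ij}\,\cdot(\text{ridge data})=$ a boundary/volume-type term, and differentiate it once more. Its first variation vanishing corresponds to equilibrium of $\Wstress$ (Step 1), and the second variation identifies $\WOmega(\dot p)$ with minus the second derivative of a volume-like functional of $P^\circ$ along a deformation keeping facet distances fixed.

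\emph{Step 3: sign and kernel.} The final step, and the main obstacle, is to show that this second variation has a definite sign and vanishes only for trivial flexes. Such a deformation of $P^\circ$ moves facets purely by rotation around the origin, so we aim to compare it with a mixed-volume/Alexandrov–Fenchel type inequality: rotating supporting hyperplanes at fixed distance strictly changes the volume in second order unless the motion is a global rotation. This is exactly the stress-flex conjecture for $\Wstress$. We would first verify it for simplices by direct computation, then use a decomposition of $P^\circ$ into pyramids over facets with apex at the origin, where the pyramid volumes are $\tfrac1d h_i\vol(\sigma_i)$ with $h_i$ fixed, reducing to the sign of the second variation of facet volumes $\vol(\sigma_i)$. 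Strictness would come from the first-order rigidity of the spherical configuration of normals modulo rotations. Here we are least confident; the trivial-flex characterization of the kernel is where we expect the real difficulty.
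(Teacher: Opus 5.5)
Your Step 1 is sound. Tangential equilibrium at $p_i$ is Minkowski's relation for the facet $F_i$ of $P^\circ$ dual to $p_i$, the cone bar absorbs the radial part, and equilibrium at $p_\star$ follows because all nodal forces sum to zero. Your sign (positive on polytope edges) is also the one for which the energy comes out nonnegative.

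The gap is in Steps 2--3, which carry the whole theorem: they are not a proof, and the mechanism you propose there is not the one that works. Two ingredients are missing.

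\emph{A signature statement.} Take $\dot p_\star=0$ and let $x_k\in\RR^n$ be the $k$-th coordinate row of $\dotbs p$. Then the stress energy of $\Wstress$ equals, up to a constant factor, $\sum_k x_k\T M x_k$. Here $M$ is the Izmestiev matrix, up to a positive factor the Hessian of $h\mapsto\vol\{x:\<p_i,x\>\le h_i\}$ at $h=\bs 1$ with the normals \emph{fixed}. It has exactly one positive eigenvalue, and $M\bs 1=\tilde\alpha$ with $\tilde\alpha_i=\vol(F_i)/\|p_i\|$. Hence $\bs 1\T M\bs 1>0$, and $M$ is negative semidefinite on $\{x:\tilde\alpha\T x=0\}$. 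Your instinct that Alexandrov--Fenchel is involved is right, but it enters here: through the support numbers, applied coordinatewise to the rows of $\dotbs p$. It does not enter through a second variation of volume along rotating facets, which you neither relate to the energy nor control.

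\emph{The linear identity $\tilde\alpha\T x_k=0$,} i.e.\ $\sum_i\Wstress_{\star i}\dot p_i=0$. This, not a sign statement, is the stress-flex conjecture. It follows from the first-order vector-valued Schläfli formula alone, with no second variation. Since $\|p_i\|$ and $\<p_i,p_j\>$ are stationary, the flex moves $P^\circ$ with fixed facet distances \emph{and} first-order constant dihedral angles. So all $\dot\theta=0$, and the formula gives $\sum_i\vol(F_i)\dot n_i=0$ with $\dot n_i=\dot p_i/\|p_i\|$. You wrote that the dihedral angles ``change in a prescribed way'', which misses exactly the point that makes this work.

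Your strictness step also fails as proposed. It cannot come from first-order rigidity of the spherical configuration of normals modulo rotations. First-order flexes of the CPF with $\dot p_\star=0$ are exactly the first-order flexes of that configuration, and they exist (e.g.\ the twist of the cube), so the argument is circular. The simplex case is vacuous, since a coned simplex is $K_{d+2}$ and already first-order rigid.

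What works is linear algebra. Suppose the energy vanishes, so $x_k\T Mx_k=0$. Then $x_k$ is a vector in $\{\tilde\alpha\T x=0\}$, a subspace on which $M$ is semidefinite and which together with $\bs 1$ spans $\RR^n$. This forces $Mx_k=0$. Since $\ker M$ is the row space of $(p_1-p_\star,\dots,p_n-p_\star)$, you get $\dot p_i=A(p_i-p_\star)$. The bar conditions then say $x\T Ax$ vanishes on every bar direction. Induction over the faces $\sigma$ of $P$ shows it vanishes on $\Span(p_i-p_\star : p_i\in\sigma)$. For an edge, use the three pairwise independent directions $p_i-p_\star$, $p_j-p_\star$, $p_i-p_j$. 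For higher faces, use three distinct hyperplanes coming from facets of $\sigma$. Hence $x\T Ax\equiv 0$, so $A$ is skew and the flex is trivial.
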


\iftrue

    In fact, we obtain a tensegrity version of this result: a CPF is still prestress~stable, even if the polytope edges are modeled as \textit{cables} (can become shorter but~not longer) and the cone edges are modeled as \emph{struts} (can become longer but not~shorter).

\else

    In fact, we obtain a tensegrity version of this result: a CPF is still prestress~stable, even if the polytopes edges are modeled as \textit{struts} (can become longer but~not~shorter) and the cone edges are modeled as \emph{cables} (can become shorter but not longer).

\fi

\textit{Second-order rigidity} and \textit{prestress stability} were initially introduced by Connelly and Whiteley for tensegrity frameworks \cite{connelly1996second}, and are now employed more generally for the analysis of geometric constraint systems that are not rigid in the first order.
Notable applications are the resolution of Roth's conjecture for convex bar-cable polygons \cite{connelly1996second}, the rigidity of arbitrarily triangulated convex surfaces \cite{connelly1980rigidity, connelly2017prestress}, and the rigidity of exceptional polytopal point-hyperplane frameworks \cite{winter2026secondorder}.

Prestress stability is hereby the stronger and, for us, more convenient notion.\nls
It builds on the existence of a distinguished stress for the framework that ``blocks''~all first-order flexes (details given in \cref{sec:second_order}).
For proving \cref{res:prestress_stable} we show that the polytopal structure of a CPF, no matter how sparse in edges, gives rise to such a special stress.
This stress is known as the \emph{Wachspress stress} $\bsWstress$, a detailed account of which is given in \cref{sec:WI_stress}.

%


For proving prestress stability using $\bsWstress$ we follow the path laid out in an earlier note.
It was shown in \cite{winter2024stressflex} that prestress stability would follow from a curious property of the Wachspress stress called \emph{stress-flex orthogonality} (\cf\ \eqref{eq:stress_flex_ortho} below).
We recall~this connection in \cref{sec:stress_flex_to_CPF}.
The question for whether $\bsWstress$ has this~proper\-ty~became known as the \emph{weak stress-flex conjecture} \cite[Conjecture 5.1]{winter2024stressflex}.
We resolve this conjecture affirmatively:
\begin{theorem} 
\label{res:weak_stress_flex}
If $\bsdot p$ is a first-order motion of a CPF with $\dot p_\star=0$, then
\begin{equation}
    \label{eq:stress_flex_ortho}
    \sum_{i\not=\star} \Wstress_{\star i} \dot p_i = 0.
\end{equation}
Here the sum is over the vertices of $P$.
\end{theorem}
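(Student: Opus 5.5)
The plan is to pass to the polar dual and reduce \eqref{eq:stress_flex_ortho} to a first-order identity about a deforming polytope whose dihedral data are frozen.

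\textbf{Translating the motion.} Place $p_\star$ at the origin. Let $P^\circ=\{x:\<p_i,x\>\le 1 \text{ for all } i\}$ be the polar polytope, whose facet $F_i$ lies in the hyperplane $\<p_i,x\>=1$. A first-order motion with $\dot p_\star=0$ satisfies two sets of conditions:
\begin{itemize}
\item the cone bars give $\<p_i,\dot p_i\>=0$;
\item the polytope edges $ij$ give $\<p_i-p_j,\dot p_i-\dot p_j\>=0$, hence $\tfrac{d}{dt}\<p_i,p_j\>=0$.
\end{itemize}
Consider the one-parameter family of polytopes $P^\circ_t$ obtained by moving the facet hyperplanes with $p_i+t\dot p_i$. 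In first order this motion
\begin{itemize}
\item keeps every facet at fixed distance $1/|p_i|$ from the origin, and
\item keeps the angle $\theta_{ij}$ between the normals $n_i=p_i/|p_i|$ of adjacent facets fixed,
\end{itemize}
because it preserves $|p_i|$ and $\<p_i,p_j\>$. Here "adjacent" means that $F_i\cap F_j$ is a ridge of $P^\circ$, which is exactly the case when $ij$ is an edge of $P$. The combinatorics is constant for small $t$, since $P$ is simple in the dual sense only generically. If it is not, I would instead differentiate along the fixed normal fan and treat $P^\circ_t$ as a member of the type cone, which keeps the relevant quantities polynomial in $t$.

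\textbf{Expressing the stress.} Next I would write $\Wstress_{\star i}$ in dual terms. From the construction in \cref{sec:WI_stress}, I expect, up to a global constant, either
$$\Wstress_{\star i}=\vol_{d-1}(F_i)/|p_i|,$$
or a similar expression involving the height $1/|p_i|$. In either case, since $\dot p_i=|p_i|\,\dot n_i$, the target sum $\sum_i\Wstress_{\star i}\dot p_i$ becomes a combination of the form $\sum_i \vol_{d-1}(F_i)\,\dot n_i$, possibly with height weights.

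\textbf{Applying the vector Schläfli formula.} The key input is the vector-valued Schläfli formula of Schlenker and Souam, which I would state and prove in the following form. For a first-order deformation of a convex polytope $Q$ with facet normals $n_i$ and support numbers $h_i$,
$$\sum_{ij}\Big(\int_{F_i\cap F_j}x\,dx\Big)\,\dot\theta_{ij}\;=\;(\text{a linear expression in }\dot h_i,\ \dot n_i\text{ weighted by facet volumes/moments}).$$
In our situation $\dot\theta_{ij}=0$ and $\dot h_i=0$, so the left side vanishes. The remaining terms should reduce exactly to $\sum_i\vol(F_i)\dot n_i$ with the appropriate weights, which gives \eqref{eq:stress_flex_ortho}.

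As a sanity check, differentiating the Minkowski relation $\sum_i\vol(F_i)n_i=0$ gives
$$\sum_i\vol(F_i)\,\dot n_i=-\sum_i\tfrac{d}{dt}\vol(F_i)\,n_i.$$
So it would suffice to show that $\sum_i \tfrac{d}{dt}\vol(F_i)\, n_i$ vanishes when the angles and the heights are frozen.

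For a discrete-geometric proof of the Schläfli-type identity I would decompose $\vol(F_i)$ as a sum over the ridges of $F_i$ of cones from the foot point $h_in_i$. Differentiating, each ridge contributes two terms, one from $F_i$ and one from $F_j$. These should pair up via the two-dimensional geometry in the plane orthogonal to $F_i\cap F_j$, leaving only $\dot\theta_{ij}$ and $\dot h$ terms, in exact analogy with the classical proof of the scalar Schläfli formula.

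\textbf{Main obstacle.} The hardest step is matching the normalisation: showing that the quantity produced by the Schläfli computation is precisely $\Wstress_{\star i}$ rather than some other facet-volume weight. I would first check this on a simplex and on a cube, where the first-order flex of \cref{fig:pointed_polytope_flex} is explicit, and then fix the constants in general.
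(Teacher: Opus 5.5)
Your reduction is exactly the paper's proof of this theorem:
\begin{itemize}
\item put $p_\star=0$ and deform the polar dual by $p_i^t=p_i+t\dot p_i$;
\item observe that $\|p_i\|$ and $\<p_i,p_j\>$ ($ij\in E_P$) are constant to first order, so the support numbers and dihedral angles of $P^\circ$ are frozen;
\item rewrite $\sum_i\Wstress_{\star i}\dot p_i=\sum_iV_i\dot n_i$;
\item conclude by the differentiated Minkowski relation together with the vector Schl\"afli formula at $\dot\theta=0$ (\cref{res:Stoker_type}).
\end{itemize}
If you cite \cite{SS2}, as the paper does at this point, the argument is complete.

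Three corrections to the surrounding text. First, the ``main obstacle'' you name does not exist. $\Wstress_{\star i}=\vol(F_i^\diamond)/\|p_i-p_\star\|$ is the definition \eqref{eq:Wachspress_stress}, and $\dot n_i=\dot p_i/\|p_i\|$ holds exactly since $\<p_i,\dot p_i\>=0$, so nothing needs calibrating on a simplex or cube. Second, the formula should be stated as $\sum_R\dot\theta_R\int_Rx\,dx=\sum_i\dot V_in_i=-\sum_iV_i\dot n_i$. It has no $\dot h$-terms, so frozen support numbers play no role. Third, your fallback of working in a fixed normal fan is unavailable, since $\dot n_i\neq0$.

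The genuine gap is in your sketch of a self-contained proof of that formula. Write $F,R,S$ for faces of $P^\circ$ of dimensions $d-1,d-2,d-3$, and use the orthonumbers of \cref{sec:preparations}. Differentiating $V_F=\frac1{d-1}\sum_{R\subset F}h^F_RV_R$ produces two kinds of terms, $\dot h^F_RV_R$ and $h^F_R\dot V_R$.
\begin{itemize}
\item The kite geometry in the plane orthogonal to $R$ (the paper's \eqref{kite}) handles only the first kind. These terms contribute $\frac1{d-1}\sum_R\dot\theta_RV_Ro_R$ plus terms that vanish by Minkowski in each facet; this is the paper's $T_1$.
\item The second kind does not pair away. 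Using $h^F_Rn_F+h^{F'}_Rn_{F'}=h_Fn^F_R+h_{F'}n^{F'}_R$, it becomes $\frac1{d-1}\sum_Fh_F\sum_{R\subset F}\dot V_Rn^F_R$. This is the same vector quantity one dimension lower, and it does not vanish for free.
\end{itemize}
The reason is that freezing the dihedral angles of $P^\circ$ does not freeze the angles $\theta^F_S$ inside the facets. Already for the cube of \cref{fig:pointed_polytope_flex}, the triangles of the dual octahedron change their angles under the twist. So your claim that only $\dot\theta$- and $\dot h$-terms survive is false. Note also that without these terms the barycenters $b_R$ of the correct formula could never appear.

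The missing input is lower-dimensional. The paper supplies it through the full-flag (orthoscheme) computation of $T_2$, which uses the scalar Schl\"afli and Minkowski relations in all faces and the link identity \eqref{glucklink} from \cref{res:gluck}.

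Alternatively, you can repair your route by induction on $d$:
\begin{itemize}
\item Apply the general (non-frozen) $(d-1)$-dimensional formula to each facet. This turns the second kind into $\frac1{d-1}\sum_SV_Sb_S\sum_{F\supset S}h_F\dot\theta^F_S$.
\item Use $h_F=\<n_F,o_S\>$ together with \eqref{glucklink} to get $\sum_{F\supset S}h_F\dot\theta^F_S=\sum_{R\supset S}\dot\theta_Rh^R_S$. This vanishes when the dihedral angles are frozen.
\item In general, the centroid identity $\sum_{S\subset R}h^R_SV_Sb_S=(d-1)V_Rb_R-V_Ro_R$ then yields $\sum_R\dot\theta_RV_Rb_R$ and closes the induction.
\end{itemize}
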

%

The central observation that lead to the resolution of the weak stress-flex conjec\-ture is that the special structure of the Wachspress stress makes \eqref{eq:stress_flex_ortho} dual to~a~corollary of a generalized Schläfli formula first proven by Schlenker and Souam \cite{SS2} (the details of this connection are given in \cref{sec:SS}):

\begin{theorem}[{\cite[Theorem 1.11]{SS2}}]
\label{res:SS}
\label{res:SS*}
\label{res:SS**}
Given a differentiable 1-parameter family $P^t\subset \RR^d$ of polytopes, at $t=0$ it holds
\[\sum_{\sigma_{d-1}} \dot V_{\sigma_{d-1}}n_{\sigma_{d-1}}^{\sigma_d}=\sum_{\sigma_{d-2}}\dot\theta_{\sigma_{d-2}}^{\sigma_d} V_{\sigma_{d-2}}b_{\sigma_{d-2}}.\]
%
Here, the sums are over the $(d-1)$- and $(d-2)$-dimensional faces $\sigma_{d-1},\sigma_{d-2}\subset P$ respectively, $V_{\sigma_{d-1}}$ and $V_{\sigma_{d-2}}$ denote their volumes, $n_{\sigma_{d-1}}^{\sigma_d}$ denotes the normal vector of $\sigma_{d-1}$, and
$\theta_{\sigma_{d-2}}^{\sigma_d}$ and $b_{\sigma_{d-2}}$ denote the exterior dihedral angle and barycenter of $\sigma_{d-2}$ respectively.
\end{theorem}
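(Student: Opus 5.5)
The plan is to derive the identity from two elementary facts: the Minkowski relation for the facets of $P^t$, and a first-moment version of the divergence theorem inside each facet. No smooth Schläfli machinery is needed. Throughout, write $F,G$ for facets ($=\sigma_{d-1}$), $R$ for ridges ($=\sigma_{d-2}$), $n_F:=n_F^{\sigma_d}$ for outer unit normals and $\theta_R:=\theta_R^{\sigma_d}$. For a ridge $R\subset F$, let $m_R^F$ be the outer unit normal of $R$ inside the hyperplane of $F$. It lies in the $2$-plane $R^\perp$ and is orthogonal to $n_F$.

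\emph{Step 1 (reduce the left side to derivatives of normals).} Minkowski's relation $\sum_F V_F n_F=0$ holds for every $t$. Differentiating at $t=0$ gives
\[\sum_F \dot V_F n_F=-\sum_F V_F\,\dot n_F .\]
So it suffices to prove $-\sum_F V_F\dot n_F=\sum_R \dot\theta_R V_R b_R$.

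\emph{Step 2 (facetwise moment identity).} Fix a facet $F$ and a vector $a\in\RR^d$. Apply the divergence theorem in $\mathrm{aff}(F)$ to the tangential gradient of the linear function $x\mapsto\langle x,a\rangle$. This gradient is the constant $P_Fa$, where $P_F$ is the orthogonal projector onto the linear space parallel to $F$. We get
\[V_F\,P_F a=\sum_{R\subset F}\Big(\int_R\langle x,a\rangle\Big)m_R^F=\sum_{R\subset F}V_R\langle b_R,a\rangle\,m_R^F .\]
Hence $V_FP_F=\sum_{R\subset F}V_R\, m_R^F b_R\T$. Since $P_F$ is symmetric, also $V_FP_F=\sum_{R\subset F}V_R\, b_R (m_R^F)\T$. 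The result does not depend on the choice of origin, because the boundary terms with a constant function vanish by the $(d-1)$-dimensional Minkowski relation in $F$. Now $|n_F|=1$ gives $\dot n_F\perp n_F$, so $\dot n_F=P_F\dot n_F$ and
\[V_F\dot n_F=\sum_{R\subset F}V_R\,\langle m_R^F,\dot n_F\rangle\, b_R .\]

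\emph{Step 3 (regroup by ridges).} Sum Step 2 over all facets. Each ridge $R=F\cap G$ appears exactly twice, so
\[\sum_F V_F\dot n_F=\sum_R V_R\big(\langle m_R^F,\dot n_F\rangle+\langle m_R^G,\dot n_G\rangle\big)b_R .\]
It remains to prove the purely planar identity $\langle m_R^F,\dot n_F\rangle+\langle m_R^G,\dot n_G\rangle=-\dot\theta_R$. The vectors $m_R^F,m_R^G$ lie in $R^\perp$, so only the components of $\dot n_F,\dot n_G$ in the moving $2$-plane $R^\perp$ contribute.

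\emph{Step 4 (the main obstacle: the 2D angle computation).} The delicate point is that the plane $R^\perp$ itself moves with $t$. I would handle this by writing, inside $R^\perp$, $m_R^F=(n_G-\cos\theta_R\, n_F)/\sin\theta_R$, and symmetrically for $G$. This uses $\cos\theta_R=\langle n_F,n_G\rangle$ and convexity, so that $0<\theta_R<\pi$. Substituting and using $\langle n_F,\dot n_F\rangle=\langle n_G,\dot n_G\rangle=0$ gives
\[\langle m_R^F,\dot n_F\rangle+\langle m_R^G,\dot n_G\rangle=\frac{\langle n_G,\dot n_F\rangle+\langle n_F,\dot n_G\rangle}{\sin\theta_R}=\frac{\tfrac{d}{dt}\cos\theta_R}{\sin\theta_R}=-\dot\theta_R .\]
This expression is manifestly independent of any motion of $R^\perp$. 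Inserting it into Step 3 and Step 1 yields the claimed formula.

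The remaining care is combinatorial. The argument assumes the face lattice of $P^t$ is constant near $t=0$, which I would take as part of the meaning of a differentiable family of polytopes. I would also check the orientation conventions, namely that $m_R^F$ points out of $F$ and $\theta_R$ is the exterior angle, so that the sign in Step 4 comes out as stated.
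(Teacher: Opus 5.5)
Your proof is correct, and it takes a genuinely different and much shorter route than the paper. The sign in Step~4 is right: with $\cos\theta_R=\langle n_F,n_G\rangle$, the vector $(n_G-\cos\theta_R\,n_F)/\sin\theta_R$ has inner product $\sin\theta_R>0$ with $n_G$, so it is the outward $m_R^F$.

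The paper works flag by flag. It writes facet volumes as sums over orthoschemes in terms of orthonumbers and differentiates. It then moves the resulting terms through the face lattice using \eqref{eq:trianglerelations}, \eqref{kite}, Minkowski balancing in every face, the classical Schläfli formula in every face, and the link identity \eqref{glucklink} coming from \cref{res:gluck}. Finally it reassembles $T_1+T_2$ via barycenters of orthoschemes. You instead use the differentiated Minkowski relation once, then the static first-moment identity $V_F P_F=\sum_{R\subset F}V_R\, m_R^F\, b_R\T$ in each facet, and a two-dimensional computation at each ridge.

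This buys quite a bit:
\begin{itemize}
\item You need neither \cref{res:gluck} nor the classical Schläfli formula as input. Since your Step~2 identity is origin-independent by Minkowski in $F$, you even recover $\sum_R V_R\dot\theta_R=0$ as a by-product.
\item You need no perturbation of the coordinate system.
\item Contrary to your closing caveat, you do not need the face lattice to be constant. Steps~2--4 only involve $P=P^0$ and vectors $\dot n_F\perp n_F$, so the paper's definition of a differentiable family (facet normals and volumes differentiable) already suffices. By contrast, the paper's differentiation of orthonumbers of all faces tacitly needs the flags to persist.
\item Your Step~2 makes explicit that $b_R$ must be the centroid of $R$. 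That is what the paper's proof actually uses through $(V_1+V_2)b=V_1b_1+V_2b_2$, not the vertex average of its stated definition.
\end{itemize}

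In return, the paper's orthoscheme computation stays within the classical Schläfli framework. The authors propose this as the route toward spherical and hyperbolic analogues, where your two Euclidean ingredients (the vector Minkowski relation and first moments of linear functions) do not carry over verbatim.
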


More details on the origin of the statement and the notation are given in \cref{sec:schlafli}.
Schlenker and Souam prove \cref{res:SS**} using a double limiting~argument from an analogous result for smooth bodies in spherical (and hyperbolic)~space. 
First, they approximate the (spherical) polytope by smooth convex bodies.
Subsequently they approximate the Euclidean case by diverging the radius of the sphere to infinity.
In \cref{sec:Stoker_type}, we present a new, purely discrete-geometric proof of \cref{res:SS}, avoiding approximation arguments.

Lastly, note that \Cref{res:weak_stress_flex} resolves only a special case of the \emph{(strong) stress-flex conjecture} -- an analogous conjecture about general stresses and piecewise linear surfaces. 
We briefly discuss the general conjecture and approaches in \cref{sec:outlook}.

\subsection*{Acknowledgements} We are very grateful to Ivan Izmestiev, Steven Gortler, Louis Theran, Sean Dewer and Robert Connelly for fruitful discussions. 

\subsection*{Funding}
Roman Prosanov is funded in whole by the Austrian Science Fund (FWF) \url{https://doi.org/10.55776/J4955}.

Martin Winter is funded by the SPP 2458 ``Combinatorial Synergies'' (project ID 539851419), funded by the Deutsche Forschungsgemeinschaft (DFG, German Research Foundation).

For open access purposes, the authors have applied a CC BY public copyright license to any author-accepted manuscript version arising
from this submission.

\tempnewpage

\section{Polytopes, frameworks and rigidity}
\label{sec:basic_notions}

Throughout the text let $P\subset\RR^d$ denote a (convex) polytope spanned by its~fini\-tely many vertices $p_1,...,p_n$.
Let $p_\star\in \Int(P)$ be some interior point of $P$.

By $G_P=(V_P,E_P)$ we denote the \Def{vertex-edge graph} of $P$, that is, $V_P=\{1,...,n\}$, with $ij\in E_P$ if and only of $p_i$ and $p_j$ span an edge of $P$.
%
%
Then $$G_P^\star=(V_P^\star,E_P^\star)=(V_P\cupdot \{\star\}, E_P \cupdot E_\star)$$
denotes the \Def{coned vertex-edge graph}, that is, the graph obtained from $G_P$~by~adding a new \Def{cone vertex} $\star\in V_P^\star$ adjacent to all vertices in $V_P$ along new \Def{cone edges} $E_\star$.\nls
For distinction, the edges in $E_P$ we shall call \Def{polytope edges}.

A $d$-dimensional \Def{bar-joint framework} (or \Def{framework} for short) is a pair $(G,\bs p)$ consisting of a graph $G=(V,E)$ and an embedding map $\bs p\: V\to\RR^d$.
One thinks of the edges of $G$ as straight line segments between the embedded vertices.
For~example, we can treat the polytope's \Def{1-skeleton} as a framework of the vertex-edge graph $G_P$.
It is common to use the terms \Def{joint} and \Def{bar} for the embeddings of vertices and edges to distinguish from their combinatorial meaning within the graph $G$.
%

Given $P$ and $p_\star$ as above, the corresponding \Def{coned polytope framework} (or \Def{CPF} for short) is the framework $(G_P^\star,\bs p)$ of the coned vertex-edge graph that embeds the polytope's vertices and the cone point in the obvious way.

For the rest of this section we recall the necessary aspects of rigidity theory with some comments on how this applies to the theory of CPFs.

\subsection{Motions and flexes}


Rigidity theory studies deformations of frameworks~that preserve bar lengths.
Formally, a \Def{motion} of $\bs p$ is a continuous 1-parameter family~$\bs p^t\!$, $t\in[0,1]$, of embeddings with $\bs p^0 = \bs p$ and $\|p_i^t-p_j^t\|$ constant throughout for all edges $ij\in E$.
%
%
%
A motion is \Def{trivial} if it preserves \emph{all} pairwise distances between~vertices, not only along edges.
Trivial motions are induced by continuous families of global isometries, such as translations and rotations.
A non-trivial motion is called a \Def{flex}.
A framework is \Def{flexible} if it has a flex, and is said to be \Def{rigid} otherwise.

Using the tools of Wachspress Geometry it was proven in \cite[Theorem 4.5]{winter2024rigidity}~that CPFs are rigid.
This is surprising since polytope skeleta can be~quite sparse (see~\cref{res:sparse} below).
Rigidity theory has tools to quantify~this~discrepancy by investigating the order at which rigidity sets in.
For CPFs this leads to the open questions we study in this article.
In the next two section we recall the first- and second-order theory of framework rigidity.

\subsection{First-order theory}
\label{sec:first_order}

First-order theory studies deformations of a framework that preserves bar lengths in the first order, that is, for $ij\in E$
\begin{equation}
    \label{eq:first_order_d}
    \tfrac{\mathrm d}{\mathrm dt}\|p_i^t-p_j^t\|\big|_{t=0} = 0
    \;\;
    \implies
    \;\;
    0=\tfrac{\mathrm d}{\mathrm dt}\|p_i-p_j\|^2
    = 
    \big\<p_i-p_j, \tfrac{\mathrm d}{\mathrm dt}p_i-\tfrac{\mathrm d}{\mathrm dt}p_j\big\>.
\end{equation}
For convenience we drop the superscript $t$ if we evaluate at $t=0$.


This motivates the following definition: a map $\bsdot p:V\to\RR^d$ is a \Def{first-order motion} if it satisfies
\begin{equation}
    \label{eq:first_order_flex}
    \<p_j-p_i,\dot p_j-\dot p_i\>=0,\quad\text{for all $ij\in E$}.
\end{equation}
Every differentiable family as in \eqref{eq:first_order_d}, in particular every differentiable family of motions, gives rise to such a first-order motion via $\bsdot p := \tfrac{\mathrm d}{\mathrm dt}\bs p^t|_{t=0}$.
The converse~is true as well: every first-order motion comes from differentiating a family as in \eqref{eq:first_order_d}, though not necessarily from differentiating a motion.

It is possible to characterize the first-order motions that are obtained from differ\-entiating trivial motions. They are precisely of the form
\begin{equation}
    \label{eq:trivial_flex}
    \bsdot p = S\bs p + t,    
\end{equation}
where $S\in\RR^{d\x d}$ is \emph{skew-symmetric} and $t\in \RR^d$. Such first-order motions are called \Def{trivial} and they exist for every framework irrespective of structural details.
If non-trivial, they are called \Def{first-order flexes}.
A framework is \Def{first-order flexible} if it has first-order flexes, and \Def{first-order rigid} otherwise.

A major benefit of first-order theory is that it reduces rigidity analysis to linear algebra and thereby provides practical necessary criteria for the existence of a flex: if a framework is first-order rigid, then it is rigid \cite{asimow1978rigidity}.
It moreover allows us to make heuristic statements about the expected rigidity behavior of a system.
For example, the following argument suggests that many CPFs should not be rigid:

\begin{remark}
    \label{res:sparse}
    Let $P\subset\RR^d$ be a \emph{simple} polytope, that is, each vertex has degree~$d$.
    We perform the naive count ``degrees of freedom (DOF) minus constraints'' on its coned framework.
    First, ignoring the bar constraints, each joint has $d$ independent ways to move.
    This yields $d(|V_P|+1)$ DOFs. 
    Heuristically, we expect to lose exactly one DOF from each bar constraint.
    There are $|E_P|$ bars from the polytope edges, and $|V_P|$ bars from the cone edges.
    For a $d$-regular graph we have $2|E_P|=d|V_P|$.
    The total count therefore gives
    $$\text{\#DOFs}-\text{\#constraints} = d(|V_P|+1)-(|V_P|+|E_P|)=|V_P|(\tfrac d2-1)+d.$$
    In dimension $d$ there are $d$ dimensions of translations and ${d\choose 2}$ dimensions of rotations, contributing $d+{d \choose 2}$~unavoidable freedoms.
    A straightforward computation shows that for $d\ge 3$ and~$|V_P|\ge \smash{\frac{d(d-1)}{d-2}\ge 6}$ the above count exceeds the number of trivial motions and therefore~suggests that the CPF is first-order flexible.

    For example, if $P$ is the 3-dimensional cube (which has $|V_P|=8$), we expect to find a single first-order flex. An exact computation confirms this (see also \cref{fig:pointed_polytope_flex}).
\end{remark}

\subsection{Second-order theory}
\label{sec:second_order}

Second-order theory studies continuous deformations that preserve bar length up to second order.
It is employed for the analysis~of~frameworks that fail the first-order rigidity tests, and hence a natural choice for the study of CPFs.
Here we use an alternative (but equivalent) formulation of the theory developed by Connelly and Whiteley \cite{connelly1996second} that expresses second-order rigidity using~the bilinear pairing between first-order flexes and stresses. 

An \Def{equilibrium stress} (or \Def{stress} for short) of a framework $(G,\bs p)$ is a map $\bs\omega\: E\to\RR$ that satisfies the \Def{stress equilibrium condition} at each vertex:
$$\sum_{\mathclap{j:ij\in E(G)}} \omega_{ij} (p_j-p_i)=0,\quad\text{for all $i\in V$}.$$
%
%
The framework is \Def{second-order rigid} if for each first-order flex $\bsdot p$ there is a stress~$\bs\omega$ that \Def{blocks} $\bsdot p$, which means
\begin{equation}
    \label{eq:blocks}
    Q_{\bs \omega}(\bsdot p):=
    \sum_{ij\in E}\omega_{ij}\|\dot p_i-\dot p_j\|^2>0.
\end{equation}
One can show that $Q_{\bs \omega}(\bsdot p)=0$ for every trivial first-order motion $\bsdot p$, \ie\ no trivial motion can be blocked by a stress.

Establishing second-order rigidity can be difficult since the blocking stress might be different for each first-order flex.
The following is a stronger but more convenient notion: a framework is \Def{prestress stable} if there is a \textit{single} stress $\bs\omega$ that~blocks~\emph{all}~first-order flexes.
%
%
The notion and name originate in structural engineering: if a physical framework is constructed with a stress built into the structure, then any deformation in the direction of a first-order flex increases the energy expressed by \eqref{eq:blocks},\nls \ie\ is energetically unfavorable.
It holds
$$
\text{first-order rigid}
\;\Rightarrow\;
\text{prestress stable}
\;\Rightarrow\;
\text{second-order rigid}
\;\Rightarrow\;
\text{rigid}.
$$
None of the inverse implications holds in general \cite{connelly1996second}.

Lastly, we mention an alternative way of expressing \eqref{eq:blocks}:
recall that the \Def{stress matrix} $\Omega\in\RR^{V\x V}$ of a stress $\bs\omega$ has entries
$$\Omega_{ij} := \begin{cases}
    \omega_{ij} & \text{if $ij\in E$}
    \\[0.5ex]
    -\sum\limits_{\mathclap{k:ik\in E}} \omega_{ik} & \text{if $i=j$}
    \\[1.8ex]
    0 & \text{otherwise}
\end{cases}.$$
In particular, $\Omega$ is symmetric and has row and column sum zero.
%
If we interpret $\bsdot p$ as a $(d\x V)$-matrix, one can check that 
\begin{equation}
    \label{eq:prestress_stable_Omega}
    \tr(\dotbs p \kern1pt\Omega\kern1pt \dotbs p\T)
        = \sum_{ij\in E}\omega_{ij}\|\dot p_i-\dot p_j\|^2
        = Q_{\bs\omega}(\bsdot p).
\end{equation}

\tempnewpage

\section{The Wachspress stress}
\label{sec:WI_stress}

To certify prestress stability of a coned polytope framework we need to choose a suitable stress.
Stresses are an expression of redundancies in frameworks; and since a CPF can be quite sparse (\cf\ \cref{res:sparse}), one might not expect to find a stress at all.
In this section we demonstrate how the piecewise linear boundary structure of a polytope gives rise to such a distinguished stress -- the \emph{Wachspress~stress}.

As before, let $P\subset\RR^d$ be a convex polytope with interior point $p_\star\in\Int(P)$.
Let $(G_P^\star,\bs p)$ be the corresponding coned polytope framework.
By
$$(P-p_\star)^\circ := \{y\in \RR^d\mid  \<y,x - p_\star\> \le 1\text{ for all $x\in P$}\}$$
we denote the polar dual of $P$, obtained by polarizing at $p_\star$.
Let $F_i^\diamond$ be the facet of the polar dual that is dual to the vertex $p_i$ of $P$.
Likewise, let $F_{ij}^\diamond$ be the ridge (\ie\ codimension-2 face) of the polar dual that is dual to the edge $ij$ of $P$. 
Finally, let $C_i:=F_i^\diamond\vee p_\star$ and $C_{ij} := F_{ij}^\diamond \vee p_\star$ be the cones over $F_i^\diamond$ and $F_{ij}^\diamond$ respectively, where $\vee$ expresses the convex hull with the cone point. 

The \Def{Wachspress stress} $\bsWstress$ (also \Def{Izmestiev stress}) of $(G,\bs p)$ is given by
\begin{equation}\label{eq:Wachspress_stress}
\Wstress_{\star i} := \frac{\vol(F_i^\diamond)}{\|p_i - p_\star\|}\;\; \text{(for $i\not=\star$)}
\qquad 
\Wstress_{ij} := -\frac{\vol(C_{ij})}{\|p_j-p_i\|}
    \;\;\text{(for $i,j\not=\star$)}
\end{equation}

That this is indeed a stress of the framework follows from a repeated application of the \Def{Minkowski balancing property}:

\begin{theorem}[{Minkowski balancing property, see \eg\ \cite[Section 8.2.1]{schneider2013convex}}] \label{res:Minkowski_balancing}
    Let $P\subset \RR^d$ be a polytope.
    For a facet $\sigma_{d-1}$ of $P$, let $n_{\sigma_{d-1}}^{\sigma_{d}}\in\RR^d$ be the (outwards~pointing) unit normal and let $V_{\sigma_{d-1}}$ be its volume.
    Then
    \begin{equation}
        \label{eq:Minkowski_balancing}
        \sum_{\sigma_{d-1}} n_{\sigma_{d-1}}^{\sigma_{d}} V_{\sigma_{d-1}} = 0,
    \end{equation}
    Here the sum is over the facets $\sigma_{d-1}$ of $P$.
\end{theorem}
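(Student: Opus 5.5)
The plan is to reduce the statement to the divergence theorem, applied to constant vector fields. Fix a unit vector $u\in\RR^d$ and consider the constant vector field $X\equiv u$ on $\RR^d$. Its divergence vanishes identically, so by the divergence theorem (Gauss--Green), valid for Lipschitz domains and in particular for convex polytopes,
\[
0=\int_P \operatorname{div}X\,\mathrm dx=\int_{\partial P}\<X,\nu\>\,\mathrm dS .
\]
Here $\nu$ is the outward unit normal, defined almost everywhere on $\partial P$. On the relative interior of a facet $\sigma_{d-1}$ it equals the constant vector $n_{\sigma_{d-1}}^{\sigma_d}$. The boundary $\partial P$ is the union of the facets, and two distinct facets meet in a set of lower dimension, which has $(d-1)$-dimensional measure zero. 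So the boundary integral splits as $\sum_{\sigma_{d-1}}\<u,n_{\sigma_{d-1}}^{\sigma_d}\>V_{\sigma_{d-1}}$. This gives
\[
\Big\<u,\sum_{\sigma_{d-1}} n_{\sigma_{d-1}}^{\sigma_d}V_{\sigma_{d-1}}\Big\>=0 .
\]
Since $u$ is arbitrary, the vector sum in \eqref{eq:Minkowski_balancing} vanishes.

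If one prefers to avoid the divergence theorem, there is an elementary alternative using projections. Again fix a unit vector $u$, and project $P$ orthogonally onto the hyperplane $u^\perp$. Call a facet an upper facet if $\<n_{\sigma_{d-1}}^{\sigma_d},u\>>0$ and a lower facet if this inner product is negative. Then both the upper facets and the lower facets project bijectively, up to measure-zero sets, onto the projection $\pi(P)$: almost every line parallel to $u$ that meets $\Int(P)$ enters $P$ through exactly one lower facet and leaves through exactly one upper facet. The projected volume of a facet is $|\<n_{\sigma_{d-1}}^{\sigma_d},u\>|\,V_{\sigma_{d-1}}$. Consequently the upper and lower contributions both equal $\vol_{d-1}(\pi(P))$, and they cancel with opposite signs. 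Facets with $\<n_{\sigma_{d-1}}^{\sigma_d},u\>=0$ contribute nothing.

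The only step needing care is the claim that almost every line through $\Int(P)$ crosses exactly one upper and one lower facet. This follows from convexity: the intersection of such a line with $P$ is a segment. One then discards the lines that pass through faces of dimension at most $d-2$, and the lines lying in facets parallel to $u$. Both are measure-zero sets in $u^\perp$, since their projections lie in finitely many sets of dimension at most $d-2$. I expect no real obstacle here; the result is classical.
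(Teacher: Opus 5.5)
The paper does not prove this statement; it quotes it from Schneider's book and uses it as a black box. So there is no route in the paper to compare against. Your proof is correct, and both of your arguments are standard.

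The divergence-theorem version is the shortest and never uses convexity. Phrased via Stokes' theorem for the closed form $\iota_u\,\mathrm{d}x_1\wedge\cdots\wedge\mathrm{d}x_d$, it gives the balancing identity for any closed oriented PL hypersurface, with orientation-induced normals. That is the kind of generality one would need for the extension to non-convex surfaces sketched in \cref{sec:non_convex}.

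The projection argument is more elementary but genuinely relies on convexity: a line through $\Int(P)$ meets $\partial P$ in exactly two points. Your measure-zero bookkeeping there is adequate.

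A third standard route is closer to the paper's own toolkit, since it uses the same pyramid decomposition as the Wachspress stress construction, and it avoids both measure theory and projections. For $x\in\Int(P)$ one has
\[
\vol(P)=\frac1d\sum_{\sigma_{d-1}}\big\langle n_{\sigma_{d-1}}^{\sigma_d},\,y_{\sigma_{d-1}}-x\big\rangle V_{\sigma_{d-1}},\qquad y_{\sigma_{d-1}}\in\sigma_{d-1}.
\]
The right-hand side is affine in $x$, while the left-hand side is constant on the open set $\Int(P)$. Hence the gradient $-\frac1d\sum_{\sigma_{d-1}} n_{\sigma_{d-1}}^{\sigma_d}V_{\sigma_{d-1}}$ must vanish.
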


\begin{lemma}
    The Wachspress stress $\bsWstress$ is a stress of the coned polytope framework. 
\end{lemma}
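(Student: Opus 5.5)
The plan is to verify the stress equilibrium condition separately at the cone vertex $\star$ and at each polytope vertex $i$. In both cases the condition will turn out to be exactly the Minkowski balancing property (\cref{res:Minkowski_balancing}) applied to a suitable polytope. Since everything is translation invariant, I first translate so that $p_\star=0$.

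The first step is to record the basic duality facts for $P^\circ$. The facet $F_i^\diamond$ lies in the hyperplane $\{y:\<y,p_i\>=1\}$, and its outward unit normal is $p_i/\|p_i\|$. The ridge $F_{ij}^\diamond=F_i^\diamond\cap F_j^\diamond$ lies in both hyperplanes $\<y,p_i\>=1$ and $\<y,p_j\>=1$. Hence the linear hyperplane through $0$ spanned by $F_{ij}^\diamond$ is $\{y:\<y,p_j-p_i\>=0\}$. Here $\vol(C_{ij})$ is understood as the $(d-1)$-dimensional volume, and $\vol(F_i^\diamond)$ as the $(d-1)$-dimensional volume.

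\emph{Equilibrium at $\star$.} Since $p_\star=0$, the condition reads
\[
\sum_i \Wstress_{\star i}\,p_i=\sum_i \vol(F_i^\diamond)\,\frac{p_i}{\|p_i\|}=0.
\]
This is precisely \eqref{eq:Minkowski_balancing} for the polytope $P^\circ$, because its facets are the $F_i^\diamond$, with volumes $\vol(F_i^\diamond)$ and unit normals $p_i/\|p_i\|$.

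\emph{Equilibrium at a vertex $i$.} Here the condition reads
\[
-\vol(F_i^\diamond)\frac{p_i}{\|p_i\|}-\sum_{j:ij\in E_P}\vol(C_{ij})\frac{p_j-p_i}{\|p_j-p_i\|}=0.
\]
I apply \cref{res:Minkowski_balancing} to the pyramid $C_i=F_i^\diamond\vee 0$, which is full-dimensional since $0$ is interior and hence not on $\operatorname{aff} F_i^\diamond$. Its facets are the base $F_i^\diamond$ together with the cones $C_{ij}$ over the facets of $F_i^\diamond$. By the face-lattice anti-isomorphism of polarity, these facets of $F_i^\diamond$ are exactly the ridges $F_{ij}^\diamond$ with $ij\in E_P$.

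For the base, the outward normal is $p_i/\|p_i\|$. For $C_{ij}$ the outward normal is $(p_j-p_i)/\|p_j-p_i\|$. To see this, note that points $y\in F_i^\diamond\subset P^\circ$ satisfy $\<y,p_j\>\le 1=\<y,p_i\>$. Hence $C_i$ lies in the half-space $\<y,p_j-p_i\>\le 0$. Minkowski balancing then gives the displayed sum with the opposite sign, which equals zero.

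The only step requiring care is this identification of the facets of the pyramid $C_i$ and the orientation of their normals. The rest is direct substitution of \eqref{eq:Wachspress_stress}.
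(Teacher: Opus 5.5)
Your proof is correct and matches the paper's: equilibrium at $\star$ is Minkowski balancing for $(P-p_\star)^\circ$, and equilibrium at a vertex $i$ is Minkowski balancing for the pyramid $C_i$, whose facets are $F_i^\diamond$ and the cones $C_{ij}$. Your half-space argument for the orientation $(p_j-p_i)/\|p_j-p_i\|$ of the normal of $C_{ij}$ fills in a step the paper only asserts.
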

\begin{proof}
    

    The stress equilibrium at the cone vertex $\star$ is obtained by applying Minkow\-ski's balancing property to $(P-p_\star)^\circ$. 
    Note that the unit normal vector $n_i$ of facet $F_i^\diamond\subset (P-p_\star)^\circ$ points in the direction $p_i-p_\star$:
    $$\sum_{i\not=\star} \omega_{\star i} (p_i-p_\star) = \sum_{i\not=\star} \frac{\vol(F_i^\diamond)}{\|p_i-p_\star\|} (p_i-p_\star)= \sum_{i\not=\star} n_i \vol(F_i^\diamond) = 0.$$
    %

    The stress equilibrium at a non-cone vertex $i\not=\star$ is obtained by applying~Min\-kow\-ski's balancing property to the cone $C_i\subset (P-p_\star)^\circ$.
    The facets of $C_i$ are~$F_i^\diamond$~and the cones $C_{ij}$ for $ij\in E_P$ (see \cref{fig:cube_cones}). 
    Note that the normal vector $n_{ij}$ at the facet $C_{ij}$ points in the direction of $p_j-p_i$.
    With this, we obtain
    \begin{align*} 
        \sum_{\mathclap{j:ij\in E_P^\star}} \omega_{ij} (p_j-p_i)
        &\;=\;
        \omega_{\star i}(p_\star-p_i)  \;+\; \sum_{\mathclap{j:ij\in E_P}} \omega_{ij}(p_j-p_i)
        \\[0ex]&\;=\;
        -\frac{\vol(F_i^\diamond)}{\|p_i-p_\star\|} (p_i-p_\star) \;-\;\sum_{\mathclap{j:ij\in E_P}} \;\;\frac{\vol(C_{ij})}{\|p_j-p_i\|} (p_j-p_i) 
        \\[0.2ex]&\;=\; 
        -\Bigg(n_i\vol(F_i^\diamond)\;+\;\sum_{\mathclap{j:ij\in E_P}}  n_{ij} \vol(C_{ij})\Bigg)
        = 0.\qedhere
    \end{align*}
    %
\end{proof}

\begin{figure}[h!]
    \centering
    \includegraphics[width=0.6\linewidth]{img/cube_cones.pdf}
    \caption{Decomposition of the dual polytope $(P-p_\star)^\circ$ into its facial pyramids. The entries of the Wachspress stress are derived from facet volumes in this decomposition. 
    }
    \label{fig:cube_cones}
\end{figure}


The name \emph{Wachspress stress} derives from the fact that the entries $\Wstress_{\star i}$ on the~cone edges are (up to a normalization factor) precisely the \emph{Wachspress coordinates} $\alpha\in\RR^n$ of the point $p_\star$ in $P$.
Wachspress coordinates are a form of generalized barycentric coordinates initially introduced in geometric modeling by Wachspress for polygons and simple 3-polytopes \cite{wachspress1975rational}, and subsequently generalized by Warren to polytopes and polyhedra of general combinatorics and dimension \cite{warren1996barycentric}.
They have since been studied in a variety of context, including algebraic geometry \cite{kohn2020projective,irving2014geometry} and polyhedral rigidity \cite{winter2024rigidity}.

Similarly, the stress entries $\smash{\Wstress_{ij}}$ on the polytope edges correspond to the entries of the \emph{Izemstiev matrix} $M\in\RR^{n\times n}$ for the point $p_\star$ in $P$.
The Izmestiev matrix~is a Colin de Verdière matrix of the polytope's edge graph. 
It was introduced~by~Izmestiev in \cite{izmestiev2010colin} as a generalization of construction by Lovász for dimension $d=3$~\cite{lovasz2001steinitz}.
This construction has since found applications in polyhedral rigidity \cite{winter2024rigidity} and~combinatorics \cite{narayanan2021spectral}.

A detailed explanation for the relation between the Wachspress coordinates and the Izmestiev matrix is given in \cite[Section 3.1 -- 3.3]{winter2024rigidity}.
We recall the essential facts.


\begin{theorem}[{\cite[Theorem 3.3]{winter2024rigidity}}]
\label{res:Izmestiev}
Given a polytope $P\subset\RR^d$ with $n$ vertices and with interior point~$p_\star\in\Int(P)$,
there exists a symmetric matrix $M\in\RR^{n\x n}$ (the \Def{Izmestiev matrix}~of~$p_\star$~in~$P$) with the following properties:
\begin{myenumerate}
    \item $M_{ij}>0$ whenever $ij\in E_P$,
    \item $M_{ij}=0$ whenever $i\not=j$ and $ij\not\in E_P$,
    \item $M$ has a unique positive eigenvalue (\ie\ of multiplicity one), and
    \item $\ker M = \Span\kern.5pt(p_1 - p_\star,..., p_n-p_\star)\T$.
\end{myenumerate}
\end{theorem}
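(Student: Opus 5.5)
The plan is to construct $M$ as the Hessian of a volume function, following Izmestiev. For $c\in\RR^n$ near $\mathbf 1=(1,\dots,1)$, I will consider the polytope
$$P^\circ(c):=\{x\in\RR^d\mid \<x,p_i-p_\star\>\le c_i\text{ for all } i\},$$
so that $P^\circ(\mathbf 1)=(P-p_\star)^\circ$. Near $\mathbf 1$ the combinatorial type of $P^\circ(c)$ is constant, and $c\mapsto\vol(P^\circ(c))$ is a homogeneous polynomial of degree $d$. I then set
$$M_{ij}:=\frac{\partial^2\vol(P^\circ(c))}{\partial c_i\,\partial c_j}\Big|_{c=\mathbf 1}.$$
Symmetry of $M$ is automatic.

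\emph{Properties (i) and (ii).} Moving the $i$-th hyperplane gives the first derivative
$$\partial_i\vol=\frac{\vol(F_i(c))}{\|p_i-p_\star\|},$$
where $F_i(c)$ is the facet of $P^\circ(c)$ dual to $p_i$. I then differentiate this facet volume in $c_j$ for $j\ne i$. If $ij\notin E_P$, the facets $F_i$ and $F_j$ do not share a ridge, so $\vol(F_i)$ is locally independent of $c_j$ and $M_{ij}=0$. If $ij\in E_P$, moving the $j$-th hyperplane shifts the ridge $F_{ij}^\diamond$ within $F_i$. An elementary computation with the angle between the two normals $p_i-p_\star$ and $p_j-p_\star$ should give
$$M_{ij}=\frac{\vol(F_{ij}^\diamond)}{\|p_i-p_\star\|\,\|p_j-p_\star\|\,\sin\angle(p_i-p_\star,\,p_j-p_\star)}>0.$$
Only the positivity of this expression matters here.

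\emph{Kernel, inclusion $\supseteq$.} Translating $P^\circ(c)$ by $v\in\RR^d$ replaces $c$ by $c+u_v$, where $(u_v)_i=\<v,p_i-p_\star\>$. Facet volumes are translation invariant, so the gradient of $\vol$ is constant along the lines $c+s\,u_v$. Differentiating in $s$ gives $Mu_v=0$. The vectors $u_v$ are exactly the row span of the matrix with columns $p_i-p_\star$, which is the claimed space.

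\emph{Unique positive eigenvalue.} By Euler's relation for the degree-$d$ homogeneous polynomial $\vol$, one has $\mathbf 1\T M\mathbf 1=d(d-1)\vol((P-p_\star)^\circ)>0$, so $M$ has at least one positive eigenvalue. For at most one, Brunn–Minkowski implies that $\vol^{1/d}$ is concave on the cone of parameters $c$ giving this combinatorial type (Minkowski combinations). Expanding the Hessian of $\vol^{1/d}$ yields
$$\vol\cdot M\;\preceq\;\tfrac{d-1}{d}\,\nabla\vol\,\nabla\vol\T,$$
which is a rank-one upper bound. Hence $M$ has at most one positive eigenvalue, and it is simple.

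\emph{Kernel, inclusion $\subseteq$: the main obstacle.} The remaining step is to show that $\dim\ker M=d$, i.e.\ that there is no extra kernel beyond translations. Concavity alone does not exclude additional zero eigenvalues. I would argue by induction on $d$, in the style of the Alexandrov–Fenchel equality analysis. Suppose $Mx=0$. Row $i$ of $Mx=0$ says that the first-order variation of the facet $F_i$ under the parallel shift $x$ has zero volume change. The facet $F_i$ is a $(d-1)$-polytope whose own Hessian matrix, by induction, has a unique positive eigenvalue and kernel equal to its translations. Combining the rows of $Mx=0$ with the inductive hypothesis via a Lorentzian/Cauchy–Schwarz argument (as in Izmestiev's proof) should force the restriction of $x$ to each facet to be a translation of that facet. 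Facets that share ridges must then have compatible translations, and since the facet-adjacency graph is connected, these glue to a single global translation $v$, giving $x=u_v$. The base case $d=2$ is an explicit check for polygons. I expect this gluing and equality-case argument to be the delicate part of the proof.
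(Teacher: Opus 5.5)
\textbf{The paper does not prove this statement.} It quotes it from \cite{winter2024rigidity}, which in turn rests on Izmestiev's construction of $M$ as the Hessian of $c\mapsto\vol P^\circ(c)$ at $c=\mathbf 1$. That is exactly your $M$. Izmestiev obtains (iii) and the corank from Minkowski's second inequality and Bol's equality condition for mixed volumes. Your computation for (i), the inclusion $\supseteq$ in (iv), and the rank-one bound from concavity of $\vol^{1/d}$ are on the right track. However, the proposal has two genuine gaps.

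\textbf{First gap: the combinatorial type is not locally constant.} You claim the combinatorial type of $P^\circ(c)$ is constant near $\mathbf 1$, so that $\vol$ is a polynomial there. This holds only for simplicial $P$. For the cube of \cref{fig:pointed_polytope_flex} the polar is an octahedron. Lowering a single $c_i$ replaces a $4$-valent vertex by a short edge between two facets that previously met only in that vertex. Three consequences follow.
\begin{enumerate}
\item You have not justified that the Hessian exists.
\item Your argument for (ii) (``$\vol(F_i)$ is locally independent of $c_j$'') is false as stated.
\item The parameters giving the combinatorial type of $P^\circ(\mathbf 1)$ lie in a proper linear subspace (concurrency conditions at non-simple vertices). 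So concavity on that cone does not bound the full Hessian.
\end{enumerate}
These points are repairable. Put $u_i:=p_i-p_\star$ and let $\theta_{ij}$ be the angle between $u_i$ and $u_j$. The mixed partials $\partial_i\partial_j\vol=\vol(F_i\cap F_j)/(\|u_i\|\|u_j\|\sin\theta_{ij})$ are continuous near $\mathbf 1$, because ridges created by the perturbation have volume tending to $0$. Hence $\vol$ is $C^2$ and $M_{ij}=0$ for non-edges. Moreover, $P^\circ((1-\lambda)c+\lambda c')\supseteq(1-\lambda)P^\circ(c)+\lambda P^\circ(c')$ together with Brunn--Minkowski gives concavity of $\vol^{1/d}$ on a full neighbourhood of $\mathbf 1$.

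\textbf{Second gap: the kernel inclusion is not proved.} The more serious gap is the inclusion $\ker M\subseteq\{(\langle v,u_i\rangle)_i: v\in\RR^d\}$, i.e.\ that the corank is exactly $d$. This is the real content of (iv), and you leave it at ``should force''.

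Let $Q_i$ be the Hessian of $\vol(F_i)$ in the facet's own support numbers. Row $i$ of $Mx=0$, combined with Euler's relation on $F_i$, says that the induced variation $x|_{F_i}$ is $Q_i$-orthogonal to the facet's support vector. That vector is $Q_i$-positive, so the inductive signature yields only $Q_i(x|_{F_i})\le 0$. To force equality you need an identity tying $x^\top Mx$ to the facet forms. For example, differentiate the pyramid formula $\vol=\frac1d\sum_i c_i\vol(F_i)/\|u_i\|$ twice along $x$ at $c=\mathbf 1$. This gives $(d-2)\,x^\top Mx=\sum_i\|u_i\|^{-1}\partial_x^2\vol(F_i)$.

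\emph{Simplicial $P$.} Here the right-hand side equals $\sum_i\|u_i\|^{-1}Q_i(x|_{F_i})$. For $d\ge 3$, $Mx=0$ then forces every $Q_i(x|_{F_i})=0$. Hence $x|_{F_i}\in\ker Q_i$ is a facet translation, i.e.\ $x_k=\langle w_i,u_k\rangle$ for $k=i$ and all $F_k$ adjacent to $F_i$. These translations glue: for adjacent facets, $w_i-w_j$ is orthogonal to $u_i$, $u_j$ and the normals of all common neighbours. These span $\RR^d$ when every codimension-$3$ face lies in exactly three facets.

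\emph{Non-simplicial $P$.} Here neither step is automatic.
\begin{enumerate}
\item $\partial_x^2\vol(F_i)$ is only one-sidedly defined. It picks up contributions from facets meeting $F_i$ in lower-dimensional faces.
\item Adjacent facets may have no common neighbour at all; in the octahedron the facet-adjacency graph is the bipartite cube graph. So nothing local prevents $w_i-w_j$ from being a nonzero multiple of the direction of their common edge.
\end{enumerate}
Closing this up requires a global argument. Izmestiev avoids the facet-by-facet analysis by using the equality case of Minkowski's second inequality instead.
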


Explicit expressions are known for the entries of the Izmestiev matrix.
The~off-diagonal ``on-edge'' entries are typically given in the form \cite[Equation (3.4)]{winter2024rigidity}:
\begin{equation}
    \label{eq:Izemstiev_entries}
    M_{ij} = \frac{\vol(F_{ij}^\diamond)}{\|p_i-p_\star\|\|p_j-p_\star\|\sin\angle(p_i-p_\star,p_j-p_\star)},\quad\text{if $ij\in E_P$}.
\end{equation}
The diagonal entries can be computed from the following relation to the (unnorma\-lized) \Def{Wachspress coordinates} $\tilde\alpha_i := \vol(F_i^\diamond)/\|p_i-p_\star\|=\omega_{\star i}$:

\begin{theorem}[{\cite[Corollary 3.6]{winter2024rigidity}}]
\label{res:Izemstiev_diagonal}
$\displaystyle\sum_j M_{ij} = \tilde\alpha_i$ for all $i\in\{1,...,n\}$.
\end{theorem}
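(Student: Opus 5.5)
The plan is to derive the identity by comparing two linear relations at the vertex $p_i$. The first is the kernel property \itm4 of \cref{res:Izmestiev}. The second is the equilibrium of the Wachspress stress $\bsWstress$ at $i$, proved in the lemma above. The two will match once the edge entries $M_{ij}$ are identified with the edge entries $\Wstress_{ij}$ up to sign and a fixed constant.

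\emph{Step 1: rewrite the kernel relation.} Fix $i$. Row $i$ of $M$ annihilates the coordinate vectors of the $p_j-p_\star$, so $\sum_j M_{ij}(p_j-p_\star)=0$. Splitting off the diagonal term and writing $p_j-p_\star=(p_j-p_i)+(p_i-p_\star)$ gives
\[
\sum_{j\ne i} M_{ij}(p_j-p_i) + \Big(\sum_j M_{ij}\Big)(p_i-p_\star)=0 .
\]
By \itm2, the first sum runs only over the edges $ij\in E_P$.

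\emph{Step 2: compare edge entries.} Let $u=p_i-p_\star$, $v=p_j-p_\star$ and $\theta=\angle(u,v)$. The ridge $F_{ij}^\diamond$ lies in the codimension-2 affine subspace $\{y:\<y,u\>=\<y,v\>=1\}$. The point of this subspace closest to the origin lies in $\Span(u,v)$. A planar computation gives its distance to the origin as $h_{ij}=\|u-v\|/(\|u\|\|v\|\sin\theta)$. Since $C_{ij}$ is a pyramid of dimension $d-1$ over $F_{ij}^\diamond$ with apex $p_\star$, we get $\vol(C_{ij})=\vol(F_{ij}^\diamond)\,h_{ij}/(d-1)$. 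Comparing with \eqref{eq:Izemstiev_entries} and using $\|u-v\|=\|p_j-p_i\|$, this yields
\[
-\Wstress_{ij}=\frac{\vol(C_{ij})}{\|p_j-p_i\|}=c\,M_{ij}
\]
for a constant $c$ depending only on $d$ (namely $c=1/(d-1)$ with the naive normalization of \eqref{eq:Izemstiev_entries}).

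\emph{Step 3: use the stress equilibrium.} Equilibrium of $\bsWstress$ at $i$ reads $\sum_{j:ij\in E_P}\Wstress_{ij}(p_j-p_i)=\Wstress_{\star i}(p_i-p_\star)$. Substituting Step 2 and comparing with Step 1 gives
\[
\Big(\sum_j M_{ij}\Big)(p_i-p_\star)=c^{-1}\,\tilde\alpha_i\,(p_i-p_\star).
\]
Since $p_\star\in\Int(P)$, we have $p_i\ne p_\star$, so $\sum_j M_{ij}=c^{-1}\tilde\alpha_i$.

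\emph{Main obstacle.} The delicate point is the constant $c$. It has to be reconciled with the normalization implicit in \eqref{eq:Izemstiev_entries} and with the convention for $(d-1)$- versus $(d-2)$-dimensional volumes used in \cite{winter2024rigidity}. With the normalization as literally displayed, the argument gives the stated identity only up to the factor $d-1$. I would therefore first recheck the pyramid-volume factor and the distance formula for $h_{ij}$. I would then trace the definition of $M$ in \cite{winter2024rigidity}, presumably as a Hessian of a mixed-volume or dual-volume function, to confirm that the factor is absorbed there, so that $c=1$.
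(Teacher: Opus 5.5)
Your argument is correct, and the factor you flag as the main obstacle is genuine, not a slip on your side. The distance $\|u-v\|/(\|u\|\|v\|\sin\theta)$ and the pyramid factor $\tfrac1{d-1}$ are both right. Hence $-\Wstress_{ij}=\tfrac1{d-1}M_{ij}$, and your Step~3 yields $\sum_j M_{ij}=(d-1)\tilde\alpha_i$.

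Do not expect to recover $c=1$ by tracing conventions. $M$ is uniquely determined by \cref{res:Izmestiev}~\itm2, \itm4 and \eqref{eq:Izemstiev_entries}: the diagonal is forced by \itm4 because $p_i\neq p_\star$. With these data the printed identity fails for $d\ge 3$. For the cube $[-1,1]^3$ with $p_\star=0$, \eqref{eq:Izemstiev_entries} gives $M_{ij}=\tfrac12$ on edges. Then \itm4 forces $M_{ii}=-\tfrac12$, so $\sum_j M_{ij}=1$, while $\tilde\alpha_i=\tfrac{\sqrt3/2}{\sqrt3}=\tfrac12$.

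The proof of \cref{res:block_form} drops the same pyramid factors: $\tfrac1{d-1}$ for $C_{ij}$ and $\tfrac1d$ for $C_i$. Read consistently, \cref{res:Izemstiev_diagonal} and the block form hold with $M/(d-1)$ in place of $M$, and with $v=d\cdot\vol\big((P-p_\star)^\circ\big)$. Nothing downstream changes, since only the inertia and kernel of that block and the sign of $v$ are used. So your proposal proves the correctly normalized statement, which is the best one can do.

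On the route: the paper gives no argument here and simply imports the identity from \cite{winter2024rigidity}. Set $V(\mathbf c):=\vol\{x:\langle x,p_k-p_\star\rangle\le c_k \text{ for all } k\}$. Formula \eqref{eq:Izemstiev_entries} is exactly the mixed second derivative of $V$ at $\mathbf c=\mathbf 1$, and translation invariance of $V$ gives \itm4. By the uniqueness above, $M=\nabla^2V(\mathbf 1)$. The identity then has a one-line proof by Euler's relation: $\partial_iV(\mathbf 1)=\tilde\alpha_i$ and $\partial_iV$ is homogeneous of degree $d-1$, so $M\mathbf 1=(d-1)\tilde\alpha$, with the factor in plain sight.

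Your proof trades this scaling argument for a translation argument (\itm4), combined with Minkowski balancing on the cones $C_i$, i.e.\ the equilibrium of $\bsWstress$ at the vertices of $P$. It costs the explicit edge comparison of Step~2. In return it uses only facts already stated in the paper. It also shows that, given \itm4, the row-sum identity is equivalent to vertex equilibrium of $\bsWstress$. So \cref{res:block_form} could be proved without invoking \cref{res:Izemstiev_diagonal} at all.
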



We can now express the \Def{stress matrix $\WOmega$} of $\Wstress$ in terms of these quantities.


\begin{lemma}
\label{res:block_form}
$\WOmega$ has the block form
$$\WOmega 
\,:=\!\!\!\!\!\!
\kbordermatrix{
      & 1 & \cdots & n & \omit & \star \cr
 \phantom1    & & & & \omit\vrule & \rule{0pt}{2.2ex} \cr
 \phantom\vdots & & \mathllap- M & & \omit\vrule & \tilde\alpha \cr
 \phantom n    & & & & \omit\vrule & \rule{0pt}{2.2ex} \cr
 \omit & \multispan{5}\hrulefill \cr
 \phantom\star & & \overset{\phantom.}{\tilde\alpha\T} & & \omit\vrule & -v \cr
}
\mathrlap{\color{white}
\kbordermatrix{
      & \cr
 \textcolor{black}1    & \rule{0pt}{2.2ex} \cr
  \textcolor{black}{\text{\scriptsize$\vdots$}} & -\hat M & \cr
  \textcolor{black}n    & \rule{0pt}{2.2ex} \cr
 \omit & \hrulefill \cr
  \textcolor{black}\star & \overset{\phantom.}{\tilde\alpha\T}  \cr
}}\quad,
$$
where $M$ is the Izmestiev matrix of $p_\star$ in $P$, $\tilde\alpha$ are the (unnormalized) Wachspress coordinates of $p_\star$ in $P$, and $v:=\vol(P-p_\star)^\circ$.
\end{lemma}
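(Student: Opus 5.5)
We compare $\WOmega$ with the block form entry by entry, using the definition of the stress matrix from \cref{sec:second_order}. There are four kinds of entries: the off-diagonal entries among the polytope vertices, the diagonal entries at polytope vertices, the entries linking polytope vertices to the cone vertex, and the diagonal entry at $\star$.

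The entries $\WOmega_{i\star}=\WOmega_{\star i}=\Wstress_{\star i}=\vol(F_i^\diamond)/\|p_i-p_\star\|=\tilde\alpha_i$ hold by definition. For $i\ne j$ with $ij\notin E_P$, both $\WOmega_{ij}$ and $M_{ij}$ vanish by \cref{res:Izmestiev}.

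For $ij\in E_P$ we must show $\vol(C_{ij})/\|p_j-p_i\|=M_{ij}$, using \eqref{eq:Izemstiev_entries}. The cone $C_{ij}$ is a pyramid over the $(d-2)$-face $F_{ij}^\diamond$ with apex $p_\star$. Since $F_{ij}^\diamond$ is not a facet of $C_{ij}$, we first compute its height carefully. Place $p_\star$ at the origin and write $a:=p_i-p_\star$ and $b:=p_j-p_\star$. The affine hull of $F_{ij}^\diamond$ is $\{y:\<y,a\>=1=\<y,b\>\}$. Its direction is $\Span(a,b)^\perp$, and its closest point to the origin is the point $y_0\in\Span(a,b)$ with $\<y_0,a\>=\<y_0,b\>=1$.

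We view $C_{ij}$ as a $(d-1)$-dimensional pyramid inside $\Span(y_0)\oplus\Span(a,b)^\perp$. Its volume is then $\frac1{d-1}\vol(F_{ij}^\diamond)\,\|y_0\|$.

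It remains to compute $\|y_0\|$ by planar geometry in $\Span(a,b)$. Let $\gamma=\angle(a,b)$, and let $\phi$ be the angle between $y_0$ and $a$. The conditions $\|y_0\|\|a\|\cos\phi=1$ and $\|y_0\|\|b\|\cos(\gamma-\phi)=1$ together with $\|a-b\|$ should give
$$\|y_0\|=\frac{\|a-b\|}{\|a\|\|b\|\sin\gamma}.$$
This is the circumradius-type formula $\|y_0\|=\|a-b\|/(2\,\text{area}\cdot\ldots)$, up to checking constants. Substituting, we get
$$\frac{\vol(C_{ij})}{\|p_j-p_i\|}=\frac{1}{d-1}\cdot\frac{\vol(F_{ij}^\diamond)}{\|a\|\|b\|\sin\gamma}.$$
This agrees with \eqref{eq:Izemstiev_entries} up to the factor $\frac1{d-1}$, which we expect to be absorbed by the normalization convention of $M$ in \cite{winter2024rigidity}. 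We would check that convention and, if needed, state the lemma with $M$ normalized accordingly. This step is the main obstacle: the elementary identity for $\|y_0\|$ must be verified (for instance via the triangle $0,a,b$ and its circumcircle), and the constants must be reconciled.

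For the diagonal entries at $i\ne\star$, the stress matrix gives
$$\WOmega_{ii}=-\Wstress_{\star i}-\sum_{j:ij\in E_P}\Wstress_{ij}=-\tilde\alpha_i+\sum_{j:ij\in E_P}M_{ij}.$$
By \cref{res:Izemstiev_diagonal}, $\sum_j M_{ij}=\tilde\alpha_i$, so this equals $-M_{ii}$, as required.

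Finally, $\WOmega_{\star\star}=-\sum_i\tilde\alpha_i$. To see that this equals $-v$, note that $(P-p_\star)^\circ$ decomposes into the pyramids $C_i$, each of height $1/\|p_i-p_\star\|$. Hence
$$\vol(C_i)=\frac1d\vol(F_i^\diamond)/\|p_i-p_\star\|,$$
and summing gives $\sum_i\tilde\alpha_i=d\,v$. So $\WOmega_{\star\star}$ equals $-v$ only up to the factor $d$, which again we attribute to the volume normalization (for example, normalized volumes) and reconcile alongside the factor $\frac1{d-1}$ above.
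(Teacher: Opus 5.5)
Your route is the paper's own: $\tilde\alpha$-blocks by definition, off-diagonal $M$-entries from the height of the pyramid $C_{ij}$ over $F_{ij}^\diamond$, diagonal $M$-entries from \cref{res:Izemstiev_diagonal} plus zero row sums, and the corner entry from the decomposition of $(P-p_\star)^\circ$ into the $C_i$. As written, however, the proposal does not prove the statement, because two things are left open.

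The first is the planar identity, which closes in one line and needs no circumcircle. (The circumcenter $z$ of $0,a,b$ satisfies $\<z,a\>=\|a\|^2/2$, not $\<z,a\>=1$.) Since $y_0\in\Span(a,b)$ and $\<y_0,a-b\>=0$, $y_0$ is a multiple of the foot $q$ of the perpendicular from $0$ to the line through $a,b$. Because $\<q,a\>=\|q\|^2$, the condition $\<y_0,a\>=1$ forces $y_0=q/\|q\|^2$. Hence $\|y_0\|=1/h_{ij}$ with $h_{ij}=\|q\|=\|a\|\|b\|\sin\gamma/\|a-b\|$, which is twice the area of the triangle $0ab$ over its base. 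This is exactly the paper's observation that the affine span of $F_{ij}^\diamond$ lies at distance $h_{ij}^{-1}$ from $p_\star$. Your formula for $\|y_0\|$ is therefore right as stated, with no further constants.

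The second issue is the constants, and here you found something real. The paper's proof simply writes $\vol(C_{ij})=\vol(F_{ij}^\diamond)h_{ij}^{-1}$ and $\vol(C_i)=\vol(F_i^\diamond)\|p_i-p_\star\|^{-1}$, i.e.\ it uses ``pyramid $=$ base $\times$ height''. That is exact only if $k$-dimensional volume means $k!$ times Lebesgue measure. You should adopt this normalized convention once, in \eqref{eq:Wachspress_stress}, \eqref{eq:Izemstiev_entries}, $\tilde\alpha$ and $v$ alike. Then your factors $\frac1{d-1}$ and $d$ disappear and every step goes through exactly as stated.

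This is also the only convention under which the cited \cref{res:Izemstiev_diagonal} holds. Take Lebesgue measure and let $M$ be the Hessian at $c=\bs 1$ of $c\mapsto\vol\{y:\<y,p_i-p_\star\>\le c_i\text{ for all }i\}$. Euler's relation for its degree-$(d-1)$ homogeneous gradient then gives $\sum_j M_{ij}=(d-1)\tilde\alpha_i$. The rescalings by $(d-2)!$ for $M$ and $(d-1)!$ for $\tilde\alpha$ are precisely what remove that factor.

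What you cannot do is what your diagonal step currently does. You computed $-\Wstress_{ij}=\frac1{d-1}M_{ij}$ in Lebesgue measure for the off-diagonal entries, but then used $-\Wstress_{ij}=M_{ij}$ together with $\sum_jM_{ij}=\tilde\alpha_i$ for the diagonal. These identities hold simultaneously only in the normalized convention.

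If you insist on Lebesgue measure throughout, the correct statement is the block form with $\frac1{d-1}M$ in place of $M$ and $d\,\vol(P-p_\star)^\circ$ in place of $v$. This is harmless downstream, since \cref{res:Omega_prop} and the proof of \cref{res:prestress_stable} only use the inertia and kernel of $M$ and the sign of $\WOmega_{\star\star}$.
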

\begin{proof}
    The identity holds for the entries in the $\tilde\alpha$-blocks by definition.

    We next check the $M$-block. We first verify that $M_{ij}$ from \eqref{eq:Izemstiev_entries} agrees with~$-\Wstress_{ij}$ from \eqref{eq:Wachspress_stress} whenever $ij\in E_P$. This amounts to checking
    \begin{equation}
    \label{eq:target}
    \displaystyle -\Wstress_{ij}=\frac{\vol(C_{ij})}{\|p_j-p_i\|}\overset?=\frac{\vol(F_{ij}^\diamond)}{\|p_i-p_\star\|\|p_j-p_\star\|\sin\angle(p_i-p_\star,p_j-p_\star)}=M_{ij}.
    \end{equation}
    The argument is elementary geometric:
    let $h_{ij}$ be the altitude of the triangle~$\triangle p_\star p_i p_j$ at the vertex $p_\star$, \ie\ $h_{ij}$ is the distance of $p_\star$ from the affine span of the edge~$e_{ij}:=\conv\{p_i,p_j\} \subset P$.
    Since $F_{ij}^\diamond\subset P^\circ$ is dual to the edge $e_{ij}$, the distance of its affine span from $p_\star$ is $\smash{h_{ij}^{-1}}$.
    In particular, $\vol(C_{ij})=\vol(F_{ij}^\diamond)\smash{h_{ij}^{-1}}$.
    After substituting this into the target equation \eqref{eq:target} and comparing both sides, it remains to verify
    $$h_{ij} = \frac{\|p_i-p_\star\|\|p_j-p_\star\|\sin\angle(p_i-p_\star,p_j-p_\star)}{\|p_j-p_i\|}.$$
    But the right side is a well-known expression for the altitude in a triangle.

    The identity on the diagonal entries of the $M$-block now follow from \cref{res:Izemstiev_diagonal} and the fact that stress matrices have zero row sum.
    

    It remains to verify the bottom-right entry.
    Note that the distance of (the affine span of) the facet $F_i^\diamond$ from $p_\star$ is $\|p_i-p_\star\|^{-1}$.
    Hence $\vol(C_i)=\vol(F_i^\diamond)\|p_i-p_\star\|^{-1}=\tilde\alpha_i$.
    Since $\WOmega$ has zero row sums, the bottom-right entry computes to
    $$v = \sum_i \tilde\alpha_i = \sum_i \frac{\vol(F_i^\diamond)}{\|p_i-p_\star\|} = \sum_i \vol(C_i) = \vol(P-p_\star)^\circ,$$
    where for the last identity we used that the cones $C_i$ have disjoint interior and cover $(P-p_\star)^\circ$ (see \cref{fig:cube_cones}).
\end{proof}

The properties of $\WOmega$ are essentially determined by its upper-left block.
For later use we prove the following:

\begin{lemma}
    \label{res:Omega_prop}\,
    \begin{myenumerate}
        \item $\WOmega$ has a unique negative eigenvalue.
        \item if $\WOmega\bs q\T=0$ then $\bs q=A\bs p+t$ for some $A\in\RR^{d\x d}$ and $t\in\RR^d$.\footnote{By convention $A \bs p+t$ stands for $A\bs p+t\bs 1\T=(Ap_1+t,...,Ap_n+t)$, where $\bs 1=(1,...,1)\in\RR^n$ is the all-ones vector.}
    \end{myenumerate}
\end{lemma}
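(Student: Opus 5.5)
Both parts follow from the block form in \cref{res:block_form} combined with the spectral properties of the Izmestiev matrix from \cref{res:Izmestiev}. The idea is to reduce $\WOmega$ to $-M$ on a complementary subspace, using that $\WOmega\bs 1=0$ (zero row sums).

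Consider the $(n+1)$-dimensional space of vectors indexed by $\{1,\dots,n,\star\}$. Take the subspace $U=\{x : x_\star=0\}\cong\RR^n$. For $x\in U$ we have $x\T\WOmega x = -x\T M x$. Every vector $y$ decomposes uniquely as $y = x + c\bs 1$ with $x\in U$ (take $c=y_\star$). Since $\WOmega\bs 1 = 0$, the quadratic form satisfies $y\T\WOmega y = x\T\WOmega x = -x\T Mx$. Thus $\WOmega$, as a quadratic form, is the pullback of $-M$ under the projection $y\mapsto x$, which is a linear isomorphism from $\RR^{n+1}/\Span(\bs 1)$ onto $U$. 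By Sylvester's law of inertia, the inertia of $\WOmega$ equals that of $-M$ plus one extra zero eigenvalue. By \cref{res:Izmestiev}\,(iii), $M$ has exactly one positive eigenvalue, so $-M$ has exactly one negative one. This proves (i).

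For (ii), suppose $\WOmega\bs q\T=0$. Write each coordinate row as $y=x+c\bs 1$ as above. Then $\WOmega y=\WOmega x$, and its first $n$ entries are $-Mx$. Hence $Mx=0$, where $x_i = q_i - q_\star$ (coordinatewise in $\RR^d$, i.e.\ $x=\bs q-q_\star\bs 1$ restricted to $1,\dots,n$). By \cref{res:Izmestiev}\,(iv), each row of $(q_1-q_\star,\dots,q_n-q_\star)$ lies in the row space of $(p_1-p_\star,\dots,p_n-p_\star)$. So there is $A\in\RR^{d\x d}$ with $q_i-q_\star = A(p_i-p_\star)$ for all $i$, i.e.\ $q_i = Ap_i + (q_\star - Ap_\star)$. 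Setting $t:=q_\star-Ap_\star$, this also holds for $i=\star$, giving $\bs q = A\bs p + t$.

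The only delicate point is the bookkeeping in (ii). We only used the first $n$ rows of the equation $\WOmega\bs q\T=0$, so one should confirm nothing is lost. Conversely, one can check that any $\bs q=A\bs p+t$ is indeed in the kernel, since the $\star$-row is a consequence of zero column sums. Only the stated direction is needed, though, so this check is optional. Part (i) is straightforward once the inertia argument is set up.
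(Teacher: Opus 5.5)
Your proof is correct and is essentially the paper's. Your splitting $y = x + y_\star\bs 1$ is exactly the invertible change of variables $S$ by which the paper writes $\WOmega$ as congruent to the block-diagonal matrix with blocks $-M$ and $0$, so Sylvester's law gives \itm1. For \itm2 you, like the paper, read off $Mx=0$ with $x_i = q_i - q_\star$ from the upper $n$ rows, apply \cref{res:Izmestiev}\,\itm4 and set $t:=q_\star-Ap_\star$; using only those rows loses nothing, since only the forward implication is needed.
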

{\color{black}
\begin{proof}
    Let $\bs 0=(0,...,0)\in\RR^n$ and $\bs 1=(1,...,1)\in\RR^n$ be the all-zeroes and all-ones vectors respectively.
    Then
    \begin{equation}
    \label{eq:matrix_blocks}
    \WOmega 
    = 
    \begin{bmatrix}
    \;\,-M & \;\;M\bs 1 \\
    \bs 1\T\! M & \!\!-\bs 1\T\! M \bs 1
    \end{bmatrix}
    = 
    S\T
    \begin{bmatrix}
    -M & \bs 0 \\
    \phantom-\bs 0\T & 0
    \end{bmatrix}
    S,
    \quad
    \text{with }
    S = 
    \begin{bmatrix}
    \Id & -\bs 1 \\
    \bs 0\T & \bs 1
    \end{bmatrix}
    \end{equation}
    By Sylvester's law of inertia, $\WOmega$ and $-M$ have in particular the same number~of~ne\-gative eigenvalues. Then \itm1 follows from \cref{res:Izmestiev} \itm3. 

    To prove \itm2, we split $\bs q=(\bs q_P, q_\star)$ into the polytope and cone parts.    
    Writing out $\WOmega \bs q\T\! = 0$ for the upper blocks (\cf\ \eqref{eq:matrix_blocks}), we get $M(\bs q_P\T- q_\star\T) = 0$.
    \cref{res:Izmestiev} \itm4 in particular gives us
    $$\bs q_P-q_\star = A(p_1-p_\star,...,p_n-p_\star) = A(\bs p_P-p_\star),$$
    which rearranges to $\bs q_P= A \bs p_P + (q_\star - Ap_\star)$.
    If we set $t:=q_\star-Ap_\star$ we get both
    $$q_\star = Ap_\star + t,\quad \text{and} \quad \bs q_P = A\bs p_P+t,$$
    proving \itm2.
\end{proof}
}

\tempnewpage

\section{The stress-flex conjecture}
\label{sec:stress_flex}

It was shown in \cite{winter2024stressflex} that prestress stability of CPFs would follow from the stress-flex orthogonality for the Wachspress stress.
It is now understood that this~is,\nls most likely, not specific to the Wachspress stress or convex polytopes, but a general~property of the bilinear pairing between stresses and first-order flexes in coned 1-skeleta of closed PL surfaces. 
We comment on this generalized \emph{strong stress-flex conjecture} in \cref{sec:outlook}.
In this section we recall how the weak stress-flex conjecture for the Wachspress stress implies prestress stability.
We then derive the weak stress-flex conjecture from a vector-valued Schläfli formula due to Schlenker and Souam \cite{SS2}.

\subsection{From the weak stress-flex conjecture to prestress stability}
\label{sec:stress_flex_to_CPF}

We recall the form of the stress-flex conjecture resolved here:

\begin{theoremX}{\ref{res:weak_stress_flex}}
Let $(G_P^\star,\bs p)$ be a coned polytope framework with Wachspress stress $\bsWstress$.
If $\bsdot p$ is a first-order motion of $(G_P^\star,\bs p)$ with $\dot p_\star=0$, then
$$\sum_{i\not=\star} \Wstress_{\star i} \kern.5pt \dot p_i = 0.$$
\end{theoremX}

Below we provide a self-contained proof for how prestress stability of a CPF~follows from \cref{res:weak_stress_flex}. 
The argument is in large parts from \cite{winter2024stressflex}:

\begin{theoremX}{\ref{res:prestress_stable}}
    Coned polytope frameworks are prestress stable.
\end{theoremX}
\begin{proof}
    Let $\WOmega$ be the stress matrix of the Wachspress stress, and $\bsdot p$ some first-order motion with $\dot p_\star=0$.
    Following \eqref{eq:prestress_stable_Omega} it~suffices to prove \itm1 that $\tr(\bsdot p\kern0.5pt\WOmega\bsdot p\T)\ge 0$~and \itm2 that $\tr(\bsdot p\kern0.5pt\WOmega\bsdot p\T)=0$ only if $\bsdot p$ is trivial.
    
    %
    

    For \itm1 let $\bs e_\star\in\RR^{V_P^\star}$ be the standard unit vector with 1-entry at the cone~vertex~$\star$. 
    Then the following two identities hold:
    %
    %
    %
    %
    %
    \begin{align}
    \label{eq:temp5}
    \bs e_\star \T\WOmega\bs e_\star&=\WOmega_{\star\star}\overset{\mathclap{\ref{res:block_form}}}=-\vol(P^\circ-p_\star)<0 \notag,
    \\
    \bs e_\star \T\WOmega \bsdot p\T
        &= \sum_{i,j} \WOmega_{ij} e_{\star i} \dot p_j
        = \sum_j \WOmega_{\star j}\dot p_j 
        = \sum_{i\not=\star} \Wstress_{\star i}\dot p_i
        \overset{\ref{res:weak_stress_flex}}= 0.
    \end{align}
    Note that it is the last equality where we use stress-flex orthogonality.
    
    Let now $(\bsdot p)_k\in\RR^{V_P^\star}$ denote the $k$-th row of $\bsdot p$. 
    %
    %
    Then \eqref{eq:temp5} states that $\bs e_\star$ and $(\bsdot p)_k$ are orthogonal \wrt\ the quadratic form induced by $\WOmega$\!.
    If \mbox{$(\dotbs p)_k\WOmega (\dotbs p)_k\T<0$},\nls then~$\bs e_\star$ and $(\dotbs p)_k$ would span a 2-dimensional subspace of $\RR^V\!$ on which $\WOmega$~is~negative~definite.
    But by \cref{res:Omega_prop} \itm1 $\WOmega$ has at most one negative eigenvalue.
    This would be a contradiction.
    Hence
    \begin{equation}
        \label{eq:temp6}
        (\dotbs p)_k\WOmega(\dotbs p)_k\T\ge 0
        \;\;\implies\;\;
        \tr(\bsdot p\kern1pt\WOmega \bsdot p\T) = \mathrlap{\phantom{\sum}}\smash{\sum_{k}} (\dotbs p)_k\WOmega(\dotbs p)_k\T \ge 0,
    \end{equation}
    which proves \itm1.

    For \itm2, suppose that $\bsdot p\kern1pt\WOmega \bsdot p\T\!\!=0$.
    We need to show that $\bsdot p$ is trivial.
    In \cite{winter2024stressflex}~this is argued using affine flexes and ruled surfaces.
    The subsequent argument~is~a~refor\-mulation thereof that requires no knowledge of these concepts.

    First, using \eqref{eq:temp6} we can conclude $(\dotbs p)_k\WOmega(\dotbs p)_k\T = 0$ for all $k$.
    Since $\WOmega$ is positive semi-definite on the span of all $(\bsdot p)_k$, we also conclude $\WOmega(\bsdot p)_k\T=0$.
    \Cref{res:Omega_prop}~\itm2 then gives us $\bsdot p = A\bs p + t$ for some matrix $A\in\RR^{d\x d}$ and vector $t\in\RR^d$.
    The~first-order flex condition \eqref{eq:first_order_flex} becomes
    \begin{equation}
        \label{eq:vanishes_on_E}
        0 = \<p_i-p_j, \dot p_i-\dot p_j\> = (p_i-p_j)\T \!\!A\kern1pt (p_i-p_j),\quad\text{for all $ij\in E_P^\star$}.
    \end{equation}
    %
    %
    In other words, the quadratic form $Q(x):=x\T\! A x$ vanishes on all edge directions~of $(G_P^\star,\bs p)$.
    It now suffices to show $Q\equiv 0$: this is equivalent to $A$ being skew-symmetric and $\bsdot p=A\bs p+t$ being trivial according to \eqref{eq:trivial_flex}.

    In the remainder of the proof we show that for each face $\sigma\subset P$, $Q$ vanishes on $\mathcal E_\sigma:=\Span(p_i-p_\star\mid p_i\in \sigma)$.
    The claim $Q\equiv 0$ then follows from $\mathcal E_\sigma=\RR^d$ whenever $\sigma$ is a facet of $P$.
    We proceed by induction on the dimension of the face $\sigma$. 

    \emph{Case $\dim(\sigma)=0$:} the face $\sigma$ is a vertex $p_i$, and $Q$ vanishes on $p_i-p_\star$ by \eqref{eq:vanishes_on_E}.
    
    \emph{Case $\dim(\sigma)=1$:} the face $\sigma$ is some edge between vertices $p_i$ and $p_j$. 
    The vectors $p_i-p_\star, p_j-p_\star$ and $p_i-p_j$ are three coplanar edge directions of $(G_P^\star,\bs p)$ (their span is $\mathcal E_\sigma$), and since $p_\star\in\Int(P)$, any two are linearly independent.
    The restriction $Q|_{\mathcal E_\sigma}$ has degree at most two, but vanishes on three different 1-dimensional subspaces. Hence $Q$ vanishes on all of $\mathcal E_\sigma$.

    \emph{Case $\dim(\sigma)\ge 2$:} then there are at least three $(\delta-1)$-dimensional faces $\tau_1,\tau_2,\tau_3\subset\sigma$. The restriction $Q|_{\mathcal E_\sigma}$ has degree at most two but, by induction hypothesis, vanishes on three distinct $(\delta-1)$-dimensional subspace $\mathcal E_{\tau_i}$ $\subset \mathcal E_{\sigma}$. Hence $Q$ vanishes on all of $\mathcal E_\sigma$.
    \qedhere
\end{proof}

\begin{remark}
    The Wachspress stress has the property that it is strictly positive on the cone edges and strictly negative on the polytope edges.
    Hence, the proof of \cref{res:prestress_stable} actually shows something stronger: a CPF is not only prestress stable as a bar-joint framework, but is also prestress stable as a \emph{tensegrity framework} with cables for the polytope edges and struts for the cone edges. 
    In contrast to bars, which must stay of a fixed length during a motion, struts are allowed to get longer (but not shorter), and cables are allowed to get shorter (but not longer).
    This has also been mentioned in \cite{winter2024stressflex}.
    For the second-order theory of tensegrities, see \cite{connelly1996second}.
\end{remark}




\subsection{The vector-valued Schläfli formula and a proof of the stress-flex conjecture}
\label{sec:SS}

In this part we show that, in the case of the Wachspress stress, the stress-flex conjecture is a special case of a generalized Schläfli formula proven by Schlenker and Souam.
We first recall the essential notions.

%
%
%
From here on let $P^t$ be a 1-parameter family of polytopes, all of which have the same number of facets $F_1,...,F_n$, though can otherwise be of different combinatorics.
We say that $P^t$ is continuous (\resp\ differentiable) if both the facet normals $n_i^t$ and facet volumes $V_i^t$ change continuously (resp.\ differentiably) in $t$.
As before, we drop the superscript $t$ if we evaluate at $t=0$.
We write $\dot n_i$ and $\dot V_i$ to denote the derivative of $n_i^t$ and $V_i^t$ at $t=0$ respectively.

We recall the \Def{vector-valued Schläfli formula} by Schlenker and Souam \cite{SS2}:

%

\begin{theoremX}{\ref{res:SS*}}
\begin{equation}
\label{mains}
\sum_{\sigma_{d-1}} \dot V_{\sigma_{d-1}}n_{\sigma_{d-1}}^{\sigma_d}=\sum_{\sigma_{d-2}}\dot\theta_{\sigma_{d-2}}^{\sigma_d} V_{\sigma_{d-2}}b_{\sigma_{d-2}}.
\end{equation}
%
\end{theoremX}

The relevant special case of \cref{res:SS} can be motivated naturally.
Consider the first variation of the Minkowski balancing property (\cf\ \cref{res:Minkowski_balancing}):
\begin{equation}
    \label{eq:Minkowski_impl}
    \frac{\mathrm d}{\mathrm dt} \sum_i n_i V_i = 0 \;\;\implies\;\; \sum_i \dot n_i V_i + \sum_i n_i \dot V_i=0
\end{equation}
It turns out that if the dihedral angles of $P^t$ do not change in first order, then both summands on the left side of \eqref{eq:Minkowski_impl} vanish individually:  
\begin{corollary}
\label{res:Stoker_type}
If $\dot\theta_{ij}=0$ for all facets $F_i$ and $F_j$ that are adjacent at $t=0$, then
$$\sum_{i} n_i \dot V_i = \sum_i \dot n_i V_i = 0.$$
\end{corollary}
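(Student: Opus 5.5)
The plan is to read the corollary off \cref{res:SS} together with the first variation of Minkowski balancing, \eqref{eq:Minkowski_impl}. In \eqref{mains} the right-hand side is a sum over the ridges $\sigma_{d-2}$ of $P$, and each term carries the factor $\dot\theta_{\sigma_{d-2}}^{\sigma_d}$. Every ridge of $P$ (at $t=0$) is the intersection of exactly two facets $F_i$ and $F_j$ that are adjacent at $t=0$, and its exterior dihedral angle is $\theta_{ij}$. The hypothesis $\dot\theta_{ij}=0$ therefore kills every term on the right. Hence
\[
\sum_i n_i\dot V_i=0 .
\]
Substituting this into \eqref{eq:Minkowski_impl} immediately gives $\sum_i \dot n_i V_i=0$ as well.

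The first step is to check that \eqref{eq:Minkowski_impl} is legitimate. By the definition of a differentiable family, both $n_i^t$ and $V_i^t$ are differentiable. \Cref{res:Minkowski_balancing} holds for every $t$, so differentiating the identity $\sum_i n_i^tV_i^t=0$ with the product rule is valid.

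The second step is to match the indexing in \eqref{mains} with that of the corollary. The facets $\sigma_{d-1}$ are the $F_i$, and $n_{\sigma_{d-1}}^{\sigma_d}=n_i$. One subtlety is that the combinatorics may change along the family, so the ridge set of $P^t$ can differ from that of $P$. I would handle this by interpreting \cref{res:SS} as the paper does: the right-hand side is evaluated at $t=0$, so only ridges of $P$ itself appear, and these correspond exactly to the pairs of facets adjacent at $t=0$.

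A second subtlety concerns facets that are degenerate at $t=0$ but become genuine for $t\neq 0$. Such facets have $V_i=0$, so they drop out of $\sum_i \dot n_i V_i$. They can still contribute $\dot V_i n_i$ on the left of \eqref{mains}, but that contribution is already built into the statement of \cref{res:SS}, so nothing extra needs to be done.

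The main obstacle is not the deduction itself, which is essentially one line. It is making sure that the dihedral angle $\theta_{\sigma_{d-2}}^{\sigma_d}$ at each ridge is a differentiable function of $t$ equal to $\theta_{ij}$, namely the angle between $n_i^t$ and $n_j^t$. This follows from the differentiability of the normals. All the remaining weight rests on \cref{res:SS}, whose discrete proof the paper gives later in \cref{sec:Stoker_type}.
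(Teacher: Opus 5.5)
Your proposal is correct and is essentially the paper's proof. The paper sets all $\dot\theta_{\sigma_{d-2}}^{\sigma_d}=0$ in \cref{res:SS} to get $\sum_i n_i\dot V_i=0$, and $\sum_i \dot n_i V_i=0$ then follows from the differentiated Minkowski balancing \eqref{eq:Minkowski_impl}, exactly as you argue (your aside about facets degenerate at $t=0$ is unnecessary, since the standing assumption that every $P^t$ has the same $n$ facets $F_1,\dots,F_n$ rules that case out).
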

\begin{proof}
    Set $\dot\theta_{\sigma_2}=0$ in \cref{res:SS}.
\end{proof}

%
A new self-contained proof of \cref{res:SS} is given in \cref{sec:Stoker_type}.
The reader only interested in \cref{res:Stoker_type} can focus on \cref{sec:preparations} and \cref{sec:proof_of_stoker_type}.

It remains to show that the stress-flex conjecture for $\bsWstress$ can be seen as a dual formulation of \cref{res:Stoker_type}, and therefore proven using it.

\begin{theoremX}{\ref{res:weak_stress_flex}}
Let $(G_P^\star,\bs p)$ be a coned polytope framework with Wachspress stress $\bsWstress$.
If $\bsdot p$ is a first-order motion of $(G_P^\star,\bs p)$ with $\dot p_\star=0$, then
$$\sum_{i\not=\star} \Wstress_{\star i} \kern.5pt \dot p_i = 0.$$
\end{theoremX}

\begin{proof}
    By translation we may also assume $p_\star=0$.
    Recall that there is a differentiable 1-parameter family $\bs p^t$ of embeddings with $\bs p^0 = \bs p$ and $\dotbs p=\tfrac{\mathrm d}{\mathrm dt} \bs p^t|_{t=0}$.
    Since $\dotbs p$ is a first-order flex, it preserves bar lengths in first-order. That is
    \begin{align}
        \label{eq:temp15}
        0 &= \tfrac{\mathrm d}{\mathrm dt} \|p_i - p_\star\|^2 = \tfrac{\mathrm d}{\mathrm dt}\|p_i\|^2.
        \\
        \label{eq:temp2}
        0 &= \tfrac{\mathrm d}{\mathrm dt} \|p_i - p_j\|^2 = \tfrac{\mathrm d}{\mathrm dt}(\|p_i\|^2 - 2\<p_i,p_j\> +\|p_j\|^2) \overset{\eqref{eq:temp15}}= - 2\tfrac{\mathrm d}{\mathrm dt} \<p_i,p_j\> .
    \end{align}

    Now consider the following 1-parameter family $P^t\subset\RR^d$ of polytopes: 
    $$P^t:= \big\{x\in\RR^d\mid \<p_i^t, x\> \le 1\text{ for all $i\in\{1,...,n\}$}\big\}.$$
    For $t=0$ this is precisely the polar dual $P^\circ$.
    The number of facets of $P^t$ stays constant within a neighborhood of $t=0$, and facet normals $n_i^t$ and volumes $V_i^t$ are differentiable in $t$. 
    In particular, $\dot n_i=\dot p_i / \|p_i\|$ because $\|p_i\|$ is constant in first order by \eqref{eq:temp15}.
    Moreover, the dihedral angles $\theta_{ij}=\arccos\<n_i,n_j\>=\arccos(\<p_i,p_j\>/\|p_i\|\|p_j\|)$ are also constant in first-order by \eqref{eq:temp15} and \eqref{eq:temp2}.

    Then for the Wachspress stress $\bsWstress$ holds
    \begin{equation}
        \label{eq:temp1}
        \sum_i  \Wstress_{\star i} \dot p_i = \sum_i \frac{\vol(F_i)}{\|p_i\|} \dot p_i = \sum_i \vol(F_i) \frac{\dot p_i}{\|p_i\|}=\sum_i V_i \dot n_i \,\overset{\mathclap{\ref{res:Stoker_type}}}=\, 0,
    \end{equation}
    where in the last equality we apply \cref{res:Stoker_type}.
    %
\end{proof}



\tempnewpage

\section{A proof of the generalized Schläfli formula}
\label{sec:Stoker_type}



In this section we give a direct discrete-geometric proof of \cref{res:SS**}. This section benefits from an adjusted notation, since in the proofs here we need to refer to all parts of the face lattice of a polytope. First, let us provide a bit of history of the Schl\"afli formula and its generalizations.

\subsection{The Schläfli formula and its relatives}
\label{sec:schlafli}

As earlier, $P^t \subset \RR^d$, $t \in [0,1]$, $P^0=P$, is a differentiable family of convex polytopes with the same number of facets. 
In this section $\sigma_k$ denotes a face of $P$ of dimension $k$. Note that $\sigma_d$ is the polytope itself.
For faces $\sigma_k \subset \sigma_{k+1}$ of $P$, ${n}_{\sigma_{k}}^{\sigma_{k+1}}$ is the (outwards pointing) unit normal vector from $\sigma_{k+1}$ towards ${\sigma_{k}}$. For faces $\sigma_k \subset \sigma_{k+2}$, $\theta_{\sigma_{k}}^{\sigma_{k+2}}$ is the exterior dihedral angle of $\sigma_{k}$ in $\sigma_{k+2}$. For a face $\sigma$, $V_\sigma$ is its volume and $b_\sigma$ is its barycenter.  
%

The Euclidean version of the celebrated \Def{Schläfli formula} states that

\begin{theorem}[{\cite[Chapter 7.2.2]{AVS},~\cite{Mil}}]
\label{res:schlafli}
\[\sum_{\sigma_{d-2}}\dot\theta_{\sigma_{d-2}}^{\sigma_d} V_{\sigma_{d-2}}=0.\]
%
%
%
\end{theorem}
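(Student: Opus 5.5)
We prove the classical Euclidean Schläfli formula $\sum_{\sigma_{d-2}}\dot\theta_{\sigma_{d-2}}^{\sigma_d} V_{\sigma_{d-2}}=0$ by expressing the volume through support numbers and differentiating twice in two different ways. Fix an origin and write $h_{\sigma_{d-1}}:=\langle x, n_{\sigma_{d-1}}^{\sigma_d}\rangle$ for $x\in\sigma_{d-1}$, the support number of the facet. Decomposing $P$ into pyramids over its facets with apex at the origin gives $V_{\sigma_d}=\frac1d\sum_{\sigma_{d-1}} h_{\sigma_{d-1}}V_{\sigma_{d-1}}$. Decomposing each facet in the same way inside its affine hull gives $V_{\sigma_{d-1}}=\frac1{d-1}\sum_{\sigma_{d-2}\subset\sigma_{d-1}} h_{\sigma_{d-2}}^{\sigma_{d-1}}V_{\sigma_{d-2}}$. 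Here $h_{\sigma_{d-2}}^{\sigma_{d-1}}$ is the signed distance, inside $\operatorname{aff}\sigma_{d-1}$, from the foot of the origin to the ridge $\sigma_{d-2}$.

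We then compute $\dot V_{\sigma_d}$ in two ways. The first way is the standard first variation of volume, $\dot V_{\sigma_d}=\sum_{\sigma_{d-1}}\dot h_{\sigma_{d-1}}V_{\sigma_{d-1}}$. The justification is that moving the facet hyperplanes changes the volume, to first order, by the slabs swept by the facets, while rotations of the normals contribute only through the change of $h$ measured at a fixed point. To make this rigorous I would fix the origin inside $P$ and prove the variation formula directly, by writing $V_{\sigma_d}$ as the integral over the unit sphere of $\frac1d\rho^d$, where $\rho$ is the radial function; this gives a piecewise-smooth, discrete computation. The second way is to differentiate the pyramid formula:
\[
d\,\dot V_{\sigma_d}=\sum_{\sigma_{d-1}} \dot h_{\sigma_{d-1}}V_{\sigma_{d-1}}+\sum_{\sigma_{d-1}} h_{\sigma_{d-1}}\dot V_{\sigma_{d-1}} .
\]
Comparing the two expressions yields
\[
(d-1)\sum_{\sigma_{d-1}} \dot h_{\sigma_{d-1}}V_{\sigma_{d-1}}=\sum_{\sigma_{d-1}} h_{\sigma_{d-1}}\dot V_{\sigma_{d-1}} .
\]

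Next we expand $\dot V_{\sigma_{d-1}}$ using the first-variation formula inside each facet, namely $\dot V_{\sigma_{d-1}}=\sum_{\sigma_{d-2}\subset\sigma_{d-1}}\dot h^{\sigma_{d-1}}_{\sigma_{d-2}}V_{\sigma_{d-2}}$. This requires care because the facet's hyperplane itself moves. I would pull each facet back isometrically to a fixed copy of $\RR^{d-1}$, so that only its in-plane support numbers vary. Each ridge $\sigma_{d-2}$ lies in exactly two facets $F,F'$. The elementary planar geometry of the 2-plane orthogonal to $\sigma_{d-2}$ expresses the in-facet support numbers through the facet support numbers and the dihedral angle $\theta=\theta_{\sigma_{d-2}}^{\sigma_d}$:
\[
h^{F}_{\sigma_{d-2}}=\frac{h_{F'}-h_F\cos\theta}{\sin\theta}
\]
and symmetrically for $F'$. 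Differentiating this, multiplying by $h_F$, summing over both incident facets, and combining with the identity above should cancel every term involving $\dot h$. The only surviving contribution is then a multiple of $\sum_{\sigma_{d-2}}\dot\theta\,V_{\sigma_{d-2}}$, up to a nonvanishing factor, and this forces the sum to vanish.

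The main obstacle is the bookkeeping of this last cancellation. One must track exactly which $\dot h_F$ terms arise from the ridge identity, and verify that after multiplication by $h_F$ and regrouping they reproduce $(d-1)\sum\dot h_{\sigma_{d-1}}V_{\sigma_{d-1}}$. For that, the facet-volume formula $V_{\sigma_{d-1}}=\frac1{d-1}\sum_{\sigma_{d-2}\subset\sigma_{d-1}} h^{\sigma_{d-1}}_{\sigma_{d-2}}V_{\sigma_{d-2}}$ is used once more. If the direct bookkeeping becomes unwieldy, I would fall back on the approach via the Minkowski balancing property (Theorem~\ref{res:Minkowski_balancing}). Applying it to each facet and differentiating should give an equivalent, more symmetric identity. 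As a further check, the formula is immediate for translations and homotheties, and for $d=2$ it reduces to $\sum\dot\theta=0$, since the exterior angles of a polygon sum to $2\pi$.
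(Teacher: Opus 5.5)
\textbf{There is a genuine gap: your very first formula is false.} The identity $\dot V_{\sigma_d}=\sum_{\sigma_{d-1}}\dot h_{\sigma_{d-1}}V_{\sigma_{d-1}}$ holds only for parallel deformations, i.e.\ with fixed normals. A point $x$ that stays on the hyperplane $\{\langle x,n_F\rangle=h_F\}$ of a facet $F$ has normal velocity $\dot h_F-\langle x,\dot n_F\rangle$, and this varies along $F$. Integrating over $\partial P$ gives
\[
\dot V_{\sigma_d}=\sum_{F}V_F\bigl(\dot h_F-\langle b_F,\dot n_F\rangle\bigr),\qquad b_F=\text{centroid of }F .
\]
The rotation term does not vanish in general. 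Take the triangle with vertices $(0,0),(1,0),(s,1)$: its area is constant in $s$, but with the origin at $(\tfrac14,\tfrac14)$ one finds $\sum_F V_F\dot h_F=\tfrac14$ at $s=0$. Your radial-function computation would produce exactly this extra term rather than remove it. Consequently $(d-1)\sum_F\dot h_FV_F=\sum_F h_F\dot V_F$ is false. The in-facet formula $\dot V_F=\sum_{\sigma_{d-2}\subset F}\dot h^F_{\sigma_{d-2}}V_{\sigma_{d-2}}$ has the same defect for $d\ge 3$.

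\textbf{Even granting these formulas, the closing cancellation does not give what you expect.} Let $F,F'$ be the two facets at $\sigma_{d-2}$ and $\theta=\theta^{\sigma_d}_{\sigma_{d-2}}$. Differentiating $h^F_{\sigma_{d-2}}=(h_{F'}-h_F\cos\theta)/\sin\theta$, the $\dot\theta$-coefficient is $h^{F'}_{\sigma_{d-2}}/\sin\theta$. A computation in the $2$-plane orthogonal to $\sigma_{d-2}$ gives $h_Fh^{F'}_{\sigma_{d-2}}+h_{F'}h^F_{\sigma_{d-2}}=|o_{\sigma_{d-2}}|^2\sin\theta$, where $o_{\sigma_{d-2}}$ is the foot of the origin on $\operatorname{aff}\sigma_{d-2}$. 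So the $\dot h$-terms do cancel as you hoped, but what survives is $\sum_{\sigma_{d-2}}\dot\theta^{\sigma_d}_{\sigma_{d-2}}V_{\sigma_{d-2}}|o_{\sigma_{d-2}}|^2$, whose weight depends on the ridge. Your chain would therefore ``prove'' that this sum vanishes, which is false: for the sheared triangle with origin $(a,b)$ it equals $a+b-1$.

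\textbf{How to fix it.} Your fallback is the right idea, except that each facet's Minkowski relation should be paired with $\dot n_F$ rather than differentiated. Use the in-facet normal $m^F_{\sigma_{d-2}}=(n_{F'}-\cos\theta\,n_F)/\sin\theta$, together with $\langle\dot n_F,n_F\rangle=0$ and $\cos\theta=\langle n_F,n_{F'}\rangle$. Then
\[
0=\sum_F\Bigl\langle\dot n_F,\sum_{\sigma_{d-2}\subset F}V_{\sigma_{d-2}}m^F_{\sigma_{d-2}}\Bigr\rangle=\sum_{\sigma_{d-2}}V_{\sigma_{d-2}}\,\frac{\langle\dot n_F,n_{F'}\rangle+\langle\dot n_{F'},n_F\rangle}{\sin\theta}=-\sum_{\sigma_{d-2}}\dot\theta^{\sigma_d}_{\sigma_{d-2}}V_{\sigma_{d-2}},
\]
which is the Schläfli formula. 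The paper itself gives no proof of this statement; it is quoted from the literature.
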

%
%


The Schläfli formula extends to polytopes in the spherical $d$-space $\S^d$ and in the hyperbolic $d$-space $\H^d$, where in the right-hand side zero is replaced by $\pm\dot V_{\sigma_d}$ respectively. The Schläfli formula is known for its broad spectrum of applications, ranging from the study of isometric embeddings and their rigidity~\cite{BI, Pro}, to discrete conformality~\cite{BPS, IPW, Riv} and to geometrization results~\cite{BLP, Luo}, to name a few. It was generalized in a plethora of directions. As we already discussed, the relevant one for our paper is its vector-valued extension due to Schlenker and Souam~\cite{SS2}, which we now re-state for convenience of the reader:


\begin{theoremX}{\ref{res:SS*}}
\begin{equation}
\label{mains}
\sum_{\sigma_{d-1}} \dot V_{\sigma_{d-1}}n_{\sigma_{d-1}}^{\sigma_d}=\sum_{\sigma_{d-2}}\dot\theta_{\sigma_{d-2}}^{\sigma_d} V_{\sigma_{d-2}}b_{\sigma_{d-2}}.
\end{equation}
%
\end{theoremX}

Note that the left-hand side in \cref{mains} is translation-invariant, from which the classical Schläfli formula can be derived immediately.
%
%
The left-hand side of \cref{mains} also appears in the first-order variations of the Minkowski balancing property,~\cref{res:Minkowski_balancing}.





Similarly to the classical Schläfli formula, in~\cite{SS2} Schlenker and Souam also supply the spherical and hyperbolic versions of \cref{res:SS*}. As we mentioned in the introduction, they actually deduce \cref{res:SS*} from its spherical counterpart by a blow-up argument, and prove the spherical and hyperbolic results using an approximation of polytopes by smooth convex bodies.
Here we provide a different approach to \cref{res:SS**}.
Our proof is based on the orthoscheme decomposition, also introduced by Schläfli. We will employ the mentioned classical theorems of Schläfli and Minkowski, as well as another result of Schlenker and Souam from~\cite{SS2}, for which we will also provide a new proof, and which we formulate now.

Let $n_1, \ldots, n_k$ be an ordered $k$-tuple of points on $\S^2 \subset \RR^3$ with $n_i \neq \pm n_{i-1}$, $\alpha_i$ be the oriented angle $n_{i-1}n_in_{i+1}$ (the angle between spherical segments) with $\alpha_i \not\in\{ 0,\pm\pi\}$, $i\in\{1, ..., k\}$. Let $m_i$ be the dual point~to~the oriented segment $n_in_{i+1}$ and $\beta_i$ be the oriented angle $m_{i-1}m_i m_{i+1}$ (hence, $\beta_i$ is equal to the oriented length of $n_i n_{i+1}$). Consider differentiable 1-parameter families $n_i^t$, $n_i^0=n_i$, and the differentiable families $m_i^t, \alpha_i^t$ and $\beta_i^t$ determined by them. A \Def{first-order deformation} of such a configuration consists of the tangent vectors to these families at $t=0$. As usually, we suppress $t=0$ for first-order deformations. Note that the data of such a first-order deformation is uniquely determined by $\dot n_i$.

\begin{theorem}[{\cite[Theorem 1.1]{SS2}}]
\label{res:gluck}
For a
first-order deformation of a configuration as above we get
\[\sum_i \dot\alpha_i n_i =\sum_i\dot\beta_i m_i.\]
\end{theorem}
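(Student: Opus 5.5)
The plan is to use moving orthonormal frames along the closed spherical polygon and a telescoping sum of their angular velocities. Indices are taken cyclically modulo $k$. The idea is that each $\dot\alpha_i n_i$ and each $\dot\beta_i m_i$ measures how much the angular velocity of a frame jumps when we pass from one edge frame to the next. Going once around the polygon, these jumps must add up to zero.

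\emph{Step 1: the frames.} For each $i$ consider two positively oriented orthonormal frames of $\RR^3$:
$$E_i:=\big(n_i,\ m_i,\ n_i\times m_i\big),\qquad E_i':=\big(n_{i+1},\ m_i,\ n_{i+1}\times m_i\big).$$
Both are well defined because $n_i\neq\pm n_{i+1}$, so $m_i$ is a unit vector orthogonal to both $n_i$ and $n_{i+1}$. Both depend differentiably on $t$. We observe:
\begin{itemize}
\item $E_i'$ is obtained from $E_i$ by a rotation about the common axis $m_i$ through the oriented angle $\beta_i$, the oriented length of $n_in_{i+1}$.
\item $E_{i+1}$ is obtained from $E_i'$ by a rotation about the common axis $n_{i+1}$ through an angle $\gamma_{i+1}$. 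This angle is the oriented angle from $m_i$ to $m_{i+1}$ in the plane orthogonal to $n_{i+1}$. Since $m_i,m_{i+1}$ are the poles of the two spherical segments meeting at $n_{i+1}$, we have $\gamma_{i+1}=\pm(\pi-\alpha_{i+1})$, and in particular $\dot\gamma_{i+1}=\mp\dot\alpha_{i+1}$.
\end{itemize}
The sign here is fixed by the orientation conventions for $\alpha_i$ and $m_i$. The assumption $\alpha_i\notin\{0,\pm\pi\}$ guarantees $m_i\neq\pm m_{i+1}$, so all of this stays nondegenerate.

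\emph{Step 2: the key kinematic fact.} Write a frame as a rotation matrix $F^t\in SO(3)$ with spatial angular velocity $\omega_F$, defined by $\dot F F^{-1}=[\omega_F]_\times$. Suppose $G^t=F^t R_e(\varphi^t)$, where the rotation is about a fixed coordinate axis $e$ in body coordinates. Then $\dot G G^{-1}=\dot FF^{-1}+F\,\dot\varphi[e]_\times F^{-1}$, that is,
$$\omega_G=\omega_F+\dot\varphi\,u,\qquad u:=Fe.$$
Here $u$ is the common axis expressed in space. This is a one-line computation, and I would state it as a lemma. Applying it to the two relations of Step 1 gives
$$\omega_{E_i'}-\omega_{E_i}=\dot\beta_i\,m_i,\qquad \omega_{E_{i+1}}-\omega_{E_i'}=\dot\gamma_{i+1}\,n_{i+1}=\mp\dot\alpha_{i+1}\,n_{i+1}.$$

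\emph{Step 3: telescoping.} Summing both relations over $i\in\{1,\dots,k\}$, the left-hand sides telescope around the closed cycle to $\omega_{E_{k+1}}-\omega_{E_1}=0$. Hence
$$\sum_i\dot\beta_i m_i\mp\sum_i\dot\alpha_i n_i=0.$$
With the paper's orientation conventions the sign is the one giving $\sum_i\dot\alpha_in_i=\sum_i\dot\beta_im_i$.

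The only real obstacle is the bookkeeping of orientations. One has to check that the oriented angle $\alpha_i$ and the choice of dual point $m_i$ make the rotation about $n_{i+1}$ equal to $-(\pi-\alpha_{i+1})$ rather than $+(\pi-\alpha_{i+1})$, consistently for all $i$. I would settle this by examining a small convex spherical triangle traversed counterclockwise. In that case $\sum_i\beta_i$ is the perimeter and the frame turns by the exterior angles $\pi-\alpha_i$, which fixes the sign.

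A useful independent check is the classical Gauss–Bonnet identity: the total holonomy $\sum_i(\pi-\alpha_i)$ equals $2\pi$ minus the enclosed area. It is consistent with the infinitesimal identity above, paired against the rotation generators.
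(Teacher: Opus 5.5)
Your proof is correct, and it takes a genuinely different, though closely related, route from the paper.

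\textbf{Comparison with the paper.} The paper follows Gluck and proceeds in three steps:
\begin{itemize}
\item It \emph{constructs} Killing fields $a_i=\sum_{j<i}\Phi(\dot\alpha_jn_j-\dot\beta_jm_j)$.
\item It uses the observation that a first-order deformation is determined, up to trivial ones, by the changes of all but one length and all but two angles. From this it concludes that $\{a_i(n_i)\}$ is equivalent to the given deformation.
\item It obtains the closing relation $a_1-a_k=\Phi(\dot\alpha_kn_k-\dot\beta_km_k)$ by comparing values at $n_1$ and $n_k$.
\end{itemize}
You instead attach frames to the actual deforming polygon. Your $\omega_{E_i}$ are, up to the sign convention in $\Phi$ and a global trivial field, exactly the paper's $a_i$. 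The difference is that closure becomes the tautology $E_{k+1}=E_1$. So you need neither the reconstruction-and-uniqueness step nor the two-point evaluation, and the only analytic input is $\omega_G=\omega_F+\dot\varphi\,Fe$. Your frames also only require $n_i\neq\pm n_{i+1}$; the hypothesis $\alpha_i\notin\{0,\pm\pi\}$ is never used. In exchange, the paper's version stays intrinsic to $\S^2$ (Killing fields, a developing-type construction), which is the form the argument takes in the classical literature.

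\textbf{A sign slip to fix.} By your own formulas, the identity $\sum_i\dot\alpha_in_i=\sum_i\dot\beta_im_i$ corresponds to the \emph{upper} sign, $\gamma_{i+1}=+(\pi-\alpha_{i+1})$. In your last paragraph you instead ask for $-(\pi-\alpha_{i+1})$, which would give $\sum_i\dot\beta_im_i+\sum_i\dot\alpha_in_i=0$.

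Your convex-triangle check does produce the correct sign. Concretely:
\begin{itemize}
\item With $m_i=n_i\times n_{i+1}/\|n_i\times n_{i+1}\|$, one has $n_{i+1}=\cos\beta_i\,n_i-\sin\beta_i\,(n_i\times m_i)$. Hence $E_i'$ is $E_i$ rotated by $+\beta_i$ about $m_i$.
\item At $n_{i+1}$, the vectors $m_i$ and $m_{i+1}$ are the tangent directions toward $n_i$ and toward $n_{i+2}$, rotated by $-\pi/2$ and $+\pi/2$ about $n_{i+1}$ respectively.
\item Therefore the right-handed angle from $m_i$ to $m_{i+1}$ is $\pi-\alpha_{i+1}$. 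This holds when $\alpha_{i+1}$ is measured right-handedly about $n_{i+1}$ from the direction toward $n_{i+2}$ to the direction toward $n_i$.
\end{itemize}
That convention gives the interior angle of a counterclockwise convex polygon. It is also the convention under which the paper's computation, with $\Phi(x)(y)=y\times x$, yields the stated identity.
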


%

The case of \cref{res:gluck} corresponding to the isometric deformations, \ie\ when all $\dot\beta_i=0$, is quite frequent in the polyhedral literature, see, \eg\, \cite[Section 10.1]{Ale},~\cite[Section 3.8]{Glu},~\cite[Theorem $\text{A}_{\text{S}}$]{Sch}. As we already mentioned, the full case is proven in~\cite{SS2} by Schlenker and Souam. However, it does not have a separate proof there, the authors rather deduce it from their very general results. In particular, their proof requires an approximation of polyhedral curves by smooth curves and a limiting argument. In 
%
%
\cref{sec:gluck} we will give a streamlined, self-contained proof of \cref{res:gluck}, generalizing the approach of 
%
Gluck from~\cite{Glu}. A different proof of~\cref{res:gluck} implicitly follows from~\cite{GBKKRS}, although the statement is not explicitly formulated there and~\cite{GBKKRS} uses a very different language from ours. It is interesting that in~\cite{GBKKRS} \cref{res:gluck} is used in the context of classification of constant mean curvature surfaces in Euclidean 3-space.

\subsection{Proof preparations}
\label{sec:preparations}

We have a convex polytope $P \subset \RR^d$. 
\cref{res:gluck} implies 

\begin{corollary}
Pick two faces $\sigma_{k} \subset \sigma_{k+3}$ of $P$. We have
\begin{equation}
\label{glucklink}
\sum_{\substack{\sigma_{k+2}:\\\sigma_k \subset \sigma_{k+2}\subset \sigma_{k+3}}} \dot \theta_{\sigma_k}^{\sigma_{k+2}} n_{\sigma_{k+2}}^{\sigma_{k+3}} = \sum_{\substack{\sigma_{k+1}:\\\sigma_k \subset \sigma_{k+1}\subset \sigma_{k+3}}} \dot \theta_{\sigma_{k+1}}^{\sigma_{k+3}} n_{\sigma_k}^{\sigma_{k+1}}.
\end{equation}
\end{corollary}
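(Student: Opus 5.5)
We reduce \eqref{glucklink} to \cref{res:gluck} by passing to the normal space of $\sigma_k$ inside $\sigma_{k+3}$. Let $L$ be the linear span of $\sigma_{k+3}-b_{\sigma_{k+3}}$ (a $(k+3)$-dimensional space) and $N\subset L$ the orthogonal complement of the direction space of $\sigma_k$ in $L$. Then $N$ is $3$-dimensional, and we identify it with $\RR^3$ via an orthonormal frame that varies differentiably in $t$. Our convention is that the frame moves with $\sigma_k$ and $\sigma_{k+3}$, and we will have to check that this is harmless.

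For every face $\sigma_{k+2}$ with $\sigma_k\subset\sigma_{k+2}\subset\sigma_{k+3}$, the normal $n_{\sigma_{k+2}}^{\sigma_{k+3}}$ lies in $L$ and is orthogonal to $\sigma_k$, so it lies in $N$, and hence on the unit sphere $\S^2\subset N$. The faces $\sigma_{k+2}$ form a cycle around $\sigma_k$: this is the face figure of $\sigma_k$ in $\sigma_{k+3}$, a $3$-dimensional cone whose section is a spherical polygon. Order these faces cyclically as $\tau_1,\dots,\tau_m$ and put $n_i:=n_{\tau_i}^{\sigma_{k+3}}$.

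Consecutive faces $\tau_i,\tau_{i+1}$ meet in a face $\rho_i=\sigma_{k+1}$ with $\sigma_k\subset\rho_i\subset\sigma_{k+3}$, and every such $\sigma_{k+1}$ arises exactly once. We then read off the data of \cref{res:gluck}:
\begin{itemize}
\item The spherical segment $n_in_{i+1}$ has length equal to the exterior dihedral angle $\theta_{\rho_i}^{\sigma_{k+3}}$, so $\beta_i=\theta_{\rho_i}^{\sigma_{k+3}}$.
\item The dual point $m_i$ of the oriented segment $n_in_{i+1}$ is the unit vector in $N$ orthogonal to both $n_i$ and $n_{i+1}$, i.e.\ orthogonal to $\rho_i$ inside $N$. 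With the correct orientation this is $m_i=n_{\sigma_k}^{\rho_i}$, the unit normal from $\rho_i$ toward $\sigma_k$.
\item The angle $\alpha_i$ at $n_i$ between the great arcs to $n_{i-1}$ and $n_{i+1}$ is the angle at $\tau_i$ between the two edges $\rho_{i-1},\rho_i$ of the dual spherical polygon. By polar duality this equals the exterior angle of the face figure of $\sigma_k$ inside $\tau_i$, so $\alpha_i=\theta_{\sigma_k}^{\tau_i}$ (up to the convention $\pi-{}$interior, which disappears after differentiation up to sign).
\end{itemize}
With these identifications, \cref{res:gluck} gives $\sum_i\dot\alpha_i n_i=\sum_i\dot\beta_i m_i$, which is exactly \eqref{glucklink}.

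Three technical points need care. First, the hypotheses of \cref{res:gluck}: $n_i\neq\pm n_{i-1}$ and $\alpha_i\notin\{0,\pm\pi\}$ follow from convexity and from consecutive faces being distinct and non-parallel. Second, for small $t$ the combinatorics near $\sigma_k$ must be preserved so that the families are defined; this is implicitly assumed, as elsewhere in the section. Third, and the main obstacle, is that $N^t$ itself moves, so we must compare $\tfrac{\mathrm d}{\mathrm dt}$ in $\RR^d$ with $\tfrac{\mathrm d}{\mathrm dt}$ in the moving frame. We handle this by applying \cref{res:gluck} to the frame coordinates $R^t n_i^t$, where $R^t\colon N^t\to\RR^3$ is an isometry depending differentiably on $t$. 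Angles are intrinsic, so $\dot\alpha_i$ and $\dot\beta_i$ are unchanged. The vectors on both sides are evaluated only at $t=0$, where $R^0$ is the fixed identification, so the identity transports back to $N\subset\RR^d$ without extra terms. The crucial point is that no derivative of a vector appears in the formula, which is why the moving frame is harmless.

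The remaining work is the sign bookkeeping: checking that the orientation of the dual points $m_i$ matches the outward normals $n_{\sigma_k}^{\rho_i}$, and that the oriented angles $\alpha_i$ and $\beta_i$ correspond to the exterior dihedral angles with consistent signs. Fixing the orientation of $N$ so that the cycle $n_1,\dots,n_m$ runs counterclockwise about the direction of $\sigma_k$ should make all signs positive.
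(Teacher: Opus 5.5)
Your proposal is correct and is essentially the paper's proof, which consists of the single line ``apply \cref{res:gluck} to the spherical link of $\sigma_k$ in $\sigma_{k+3}$''; using the polar polygon of the facet normals $n_{\sigma_{k+2}}^{\sigma_{k+3}}$ instead of the link itself is the same argument, since polarity only exchanges the roles of $(n_i,\alpha_i)$ and $(m_i,\beta_i)$. Your identifications $\beta_i=\theta_{\rho_i}^{\sigma_{k+3}}$, $m_i=n_{\sigma_k}^{\rho_i}$, $\alpha_i=\theta_{\sigma_k}^{\tau_i}$ and your moving-frame remark make explicit details that the paper leaves implicit.
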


\begin{proof}
This follows from applying \cref{res:gluck} to the spherical link of $\sigma_k$ in $\sigma_{k+3}$.
\end{proof}

For a face $\sigma$, let $o_\sigma$ be the orthogonal projection from the origin $0 \in \RR^d$ to the affine span of $\sigma$. If necessary, we slightly perturb the coordinate system so that for any faces $\sigma \neq \tau$, $o_{\sigma}$ does not coincide with $o_{\tau}$. For faces $\sigma \subset \tau$, the \Def{orthonumber} 
%
%
$h^\tau_\sigma$ is the distance from $o_\sigma$ to $o_\tau$. However, when $\sigma$ has codimension one in $\tau$, we consider $h^\tau_\sigma$ as the oriented distance, \ie\ taken with the minus sign if $o_\tau$ and $\tau$ belong to the different sides of the affine span of $\sigma$ in the affine span of $\tau$. See \cref{fig:point_positions} for an example of our notation for two triples of faces $\sigma_{k} \subset \sigma_{k+1} \subset \sigma_{k+2}$ and $\sigma_{k}\subset \sigma_{k+1}^* \subset \sigma_{k+2}$.


\vspace*{1ex}
\begin{figure}[h!]
    \centering
    \includegraphics[width=0.63\linewidth]{img/orthonumbers.pdf}
    \caption{Visualization of the setup for the proof of \cref{res:SS}.}
    \label{fig:point_positions}
\end{figure}

Consider three faces $\sigma_{k} \subset \sigma_{k+1} \subset \sigma_{k+2}$. Denote by $\alpha$ the angle $\angle o_{\sigma_{k+1}} o_{\sigma_{k+2}} o_{\sigma_{k}} $, taken with the minus sign if $h_{\sigma_k}^{\sigma_{k+1}}$ and $h_{\sigma_{k+1}}^{\sigma_{k+2}}$ have opposite signs. See \cref{fig:point_positions}.
Note that since $o_{k+1}$ does not coincide with $o_{k}$ and with $o_{k+2}$, we get either $0<\alpha<\pi/2$ or $-\pi/2<\alpha<0$. We have \[ h_{\sigma_{k}}^{\sigma_{k+1}} = h_{\sigma_{k+1}}^{\sigma_{k+2}} \tan \alpha.  \]
Differentiating it, we obtain
\[ \dot h_{\sigma_{k}}^{\sigma_{k+1}}  = \dot h_{\sigma_{k+1}}^{\sigma_{k+2}}\tan\alpha + h_{\sigma_{k+1}}^{\sigma_{k+2}} \frac{\dot{\alpha}}{\cos^2 \alpha}. \]
We have \[\cos^2 \alpha =\left(\frac{h_{\sigma_{k+1}}^{\sigma_{k+2}}}{h_{\sigma_{k}}^{\sigma_{k+2}}}\right)^2.\]
From the last two equations, we get
\[\dot h_{\sigma_{k}}^{\sigma_{k+1}} = \dot h_{\sigma_{k+1}}^{\sigma_{k+2}}  \tan \alpha + \frac{\left(h_{\sigma_{k}} ^{\sigma_{k+2}}\right)^2}{h_{\sigma_{k+1}}^{\sigma_{k+2}} } \dot{\alpha}.\]
Multiplying by $h_{\sigma_{k+1}}^{\sigma_{k+2}}$ and using $h_{\sigma_{k+1}}^{\sigma_{k+2}}\tan \alpha =h_{\sigma_{k}}^{\sigma_{k+1}}$ yields
\begin{equation}\label{eq:trianglerelations}
    \dot h_{\sigma_{k}}^{\sigma_{k+1}}h_{\sigma_{k+1}}^{\sigma_{k+2}}  = h_{\sigma_{k}}^{\sigma_{k+1}} \dot h_{\sigma_{k+1}}^{\sigma_{k+2}} + \left(h_{\sigma_{k}} ^{\sigma_{k+2}}\right)^2 \dot{\alpha}.
\end{equation}

Suppose first that both $h_{\sigma_k}^{\sigma_{k+1}}$ and $h_{\sigma_{k+1}}^{\sigma_{k+2}}$ are positive. We differentiate the Pythagorean theorem for the triangle $o_{\sigma_{k+1}} o_{\sigma_{k+2}} o_{\sigma_{k}}$ and obtain
%
%
\[\dot h_{\sigma_{k}}^{\sigma_{k+1}}h_{\sigma_{k}}^{\sigma_{k+1}} +\dot h_{\sigma_{k+1}}^{\sigma_{k+2}}h_{\sigma_{k+1}}^{\sigma_{k+2}}= \dot h_{\sigma_{k}} ^{\sigma_{k+2}}h_{\sigma_{k}} ^{\sigma_{k+2}},\]
\[
\implies\quad \dot h_{\sigma_{k}}^{\sigma_{k+1}}\sin\alpha +\dot h_{\sigma_{k+1}}^{\sigma_{k+2}}\cos\alpha= \dot h_{\sigma_{k}} ^{\sigma_{k+2}}.\]

Let $n$ be the unit normal vector from $o_{\sigma_{k+2}}$ towards $o_{\sigma_k}$ and $n^\perp$ be the unit normal vector from the triangle $o_{\sigma_{k+1}} o_{\sigma_{k+2}} o_{\sigma_{k}}$ towards $o_{\sigma_{k}} o_{\sigma_{k+2}}$. From the last two equations, we compute
%
\begin{align}
\label{semikite}
\dot\alpha h_{\sigma_k}^{\sigma_{k+2}}n
&=(\dot h_{\sigma_{k}}^{\sigma_{k+1}}\cos\alpha- \dot h_{\sigma_{k+1}}^{\sigma_{k+2}}\sin\alpha)(n_{\sigma_k}^{\sigma_{k+1}}\sin\alpha+n_{\sigma_{k+1}}^{\sigma_{k+2}}\cos\alpha)
\\ \notag
&=\dot h_{\sigma_{k}}^{\sigma_{k+1}}n_{\sigma_{k+1}}^{\sigma_{k+2}}-\dot h_{\sigma_{k+1}}^{\sigma_{k+2}}n_{\sigma_{k}}^{\sigma_{k+1}}
\\ \notag
&\qquad +(\dot h_{\sigma_{k}}^{\sigma_{k+1}}\sin\alpha+\dot h_{\sigma_{k+1}}^{\sigma_{k+2}}\cos\alpha) (n_{\sigma_k}^{\sigma_{k+1}}\cos\alpha-n_{\sigma_{k+1}}^{\sigma_{k+2}}\sin\alpha)
\\ \notag
&=\dot h_{\sigma_{k}}^{\sigma_{k+1}}n_{\sigma_{k+1}}^{\sigma_{k+2}}-\dot h_{\sigma_{k+1}}^{\sigma_{k+2}}n_{\sigma_{k}}^{\sigma_{k+1}}+\dot h_{\sigma_{k}} ^{\sigma_{k+2}}n^\perp.
\end{align}
One can check that for the other possible signs of $h_{\sigma_k}^{\sigma_{k+1}}$ and $h_{\sigma_{k+1}}^{\sigma_{k+2}}$, for the same left-hand side of~\eqref{semikite}, the right-hand side either remains the same or becomes $\dot h_{\sigma_{k}}^{\sigma_{k+1}}n_{\sigma_{k+1}}^{\sigma_{k+2}}-\dot h_{\sigma_{k+1}}^{\sigma_{k+2}}n_{\sigma_{k}}^{\sigma_{k+1}}-\dot h_{\sigma_{k}} ^{\sigma_{k+2}}n^\perp.$

Now pick two faces $\sigma_{k}\subset \sigma_{k+2}$. Note that then there exist exactly two $(k+1)$-faces $\sigma_{k+1}$ and $\sigma_{k+1}^\ast$ such that $\sigma_{k}\subset\sigma_{k+1}\subset \sigma_{k+2}$ and $\sigma_{k}\subset \sigma_{k+1}^\ast \subset \sigma_{k+2}$. Define $\alpha$ as above, and similarly define $\alpha^\ast$ for the triangle $o_{\sigma_{k+1}^\ast} o_{\sigma_{k+2}} o_{\sigma_{k}}$. See \cref{fig:point_positions}. Note that either both pairs of numbers $h_{\sigma_k}^{\sigma_{k+1}}$, $h_{\sigma_{k+1}}^{\sigma_{k+2}}$ and $h_{\sigma_k}^{\sigma_{k+1}^\ast}$, $h_{\sigma_{k+1}^\ast}^{\sigma_{k+2}}$ have the same sign or they both have the opposite signs. This means that both $\alpha$ and $\alpha^\ast$ have the same sign. Observe that, depending on the sign, either \[\alpha+\alpha^\ast=\theta_{\sigma_k}^{\sigma_{k+2}}\qquad\text{or}\qquad -\alpha-\alpha^\ast=\pi-\theta_{\sigma_k}^{\sigma_{k+2}}.\] In either case,
\begin{equation}
\label{alphatheta}
\dot\alpha+\dot\alpha^\ast=\dot\theta_{\sigma_k}^{\sigma_{k+2}}.
\end{equation}

Let $n$ be the unit normal vector from $o_{\sigma_{k+2}}$ towards $o_{\sigma_k}$ and $n^\perp$ be the unit normal vector from the triangle $o_{\sigma_{k+1}} o_{\sigma_{k+2}} o_{\sigma_{k}}$ towards $o_{\sigma_{k}} o_{\sigma_{k+2}}$ and $n^{\perp\ast}$ be the unit normal vector from the triangle $o_{\sigma_{k+1}^\ast} o_{\sigma_{k+2}} o_{\sigma_{k}}$ towards $o_{\sigma_{k}} o_{\sigma_{k+2}}$. We have $n^\perp=-n^{\perp\ast}$. Note also that the term $\dot h_{\sigma_{k}} ^{\sigma_{k+2}}n^\perp$ appears in~\eqref{semikite} for the triangle $o_{\sigma_{k+1}} o_{\sigma_{k+2}} o_{\sigma_{k}}$ with the same sign as the term $\dot h_{\sigma_{k}} ^{\sigma_{k+2}}n^{\perp\ast}$ appears in such equality for the triangle $o_{\sigma_{k+1}^\ast} o_{\sigma_{k+2}} o_{\sigma_{k}}$.
From this, \eqref{alphatheta} and~\eqref{semikite}, we deduce
\begin{multline}
\label{kite}
\dot\theta_{\sigma_k}^{\sigma_{k+2}}h_{\sigma_k}^{\sigma_{k+2}}n_{\sigma_k}^{\sigma_{k+2}}=(\dot\alpha+\dot\alpha^\ast)h_{\sigma_k}^{\sigma_{k+2}}n_{\sigma_k}^{\sigma_{k+2}}\\
=\dot h_{\sigma_{k}}^{\sigma_{k+1}}n_{\sigma_{k+1}}^{\sigma_{k+2}}-\dot h_{\sigma_{k+1}}^{\sigma_{k+2}}n_{\sigma_{k}}^{\sigma_{k+1}}+\dot h_{\sigma_{k}}^{\sigma_{k+1}^\ast}n_{\sigma_{k+1}^\ast}^{\sigma_{k+2}}-\dot h_{\sigma_{k+1}^\ast}^{\sigma_{k+2}}n_{\sigma_{k}}^{\sigma_{k+1}^\ast}.
\end{multline}

%
%
A \Def{flag} is an increasing sequence of faces of $P$. We say that a flag is \Def{full} if it has a face in every dimension, that it is \Def{partial} otherwise, and that it is \Def{$k$-order} if it has a face in each dimension up to and including $k$.

Let $f$ be a full flag. We denote its respective elements by $f_0,\dots,f_d$, according to dimension. We write for short $h_{f_k}:= h_{f_k} ^{f_{k+1}}$ and ${n}_{f_k}:={n}_{f_k}^{f_{k+1}}$. Denote by $\alpha_{f_k}$ the angle $\angle o_{f_{k-1}} o_{f_{k}} o_{f_{k-2}}$, taken with the minus sign if $h_{f_{k-1}}$ and $h_{f_{k-2}}$ have opposite signs. Denote by $V_f$ the oriented volume of $f$, \ie\ $V_f:=\frac{1}{d!}h_{f_0}\ldots h_{f_{d-1}}$. 

We denote a $k$-order flag by $f^k$ and denote its elements by $f^k_0, \ldots, f^k_k$. Our conventions for full flags apply to flags of any order. 

Given a face $\sigma_k$, its volume $V_{\sigma_k}$ is
\begin{equation}
\label{volume}V_{\sigma_k}=\displaystyle\sum_{f^k: f^k_k=\sigma_k} V_{f^k}=\dfrac{1}{k!}  \displaystyle\sum_{f^k: f^k_k=\sigma_k} h_{f^k_0}\ldots h_{f^k_{k-1}}.
\end{equation}
In particular, 
\[ 
    V_{\sigma_{d-1}}= \dfrac{1}{(d-1)!} \sum_{f: f_{d-1}= \sigma_{d-1}} h_{f_0}  h_{f_1}  \ldots h_{f_{d-2}}.
\]
A variation of this gives
\begin{equation}\label{eq:varvol}
\dot{V}_{\sigma_{d-1}}=\dfrac{1}{(d-1)!}  \sum_{f:f_{d-1}=\sigma_{d-1}}\sum_{k=0}^{d-2} h_{f_0} \ldots \dot{h}_{f_k}  \ldots h_{f_{d-2}}.
\end{equation}

For a full flag $f$ and $k$ with $0\leq k\leq d-3$, equation~\eqref{eq:trianglerelations} yields
\[  \dot{h}_{f_{k}}h_{f_{k+1}}={h}_{f_{k}}\dot  h_{f_{k+1}} + \left(h_{f_{k}} ^{f_{k+2}}\right)^2 \dot{\alpha}_{f_{k+2}}.\]
Multiplying this equation by the other 
orthonumbers, we obtain
\begin{align*}
h_{f_0} \ldots h_{f_{k-1}} \dot{h}_{f_{k}} &h_{f_{k+1}} \ldots h_{f_{d-2}} 
\\&=h_{f_0} \ldots h_{f_{k}} \dot{h}_{f_{k+1}} h_{f_{k+2}}  \ldots  h_{f_{d-2}}  
\\&\qquad + h_{f_0} \ldots h_{f_{k-1}} \left(h_{f_{k}} ^{f_{k+2}}\right)^2 \dot{\alpha} _{f_{k+2}}  h_{f_{k+2}}  \ldots h_{f_{d-2}}.  
\end{align*}
Using it in~\eqref{eq:varvol}, we get
\begin{multline}
\label{eq:first}
\dot{V}_{\sigma_{d-1}}=\frac{1}{(d-2)!}\sum_{f:f_{d-1}=\sigma_{d-1}}h_{f_0}\ldots h_{f_{d-3}}\dot h_{f_{d-2}}
\\+\dfrac{1}{(d-1)!}  \sum_{f:f_{d-1}=\sigma_{d-1}}\sum_{k=0}^{d-3} (k+1) h_{f_0}  \ldots h_{f_{k-1}} \left(h_{f_{k}} ^{f_{k+2}}\right)^2 \dot{\alpha} _{f_{k+2}} h_{f_{k+2}}  \ldots h_{f_{d-2}}.
\end{multline}
Note that we of course mean that in the second sum for $k=0$ the corresponding term does not have the part $h_{f_0}\ldots h_{f_{k-1}}$ and for $k=d-3$ the corresponding term does not have the part $h_{f_{k+2}}  \ldots h_{f_{d-2}}$.

\subsection{Proof of \cref{res:Stoker_type}}
\label{sec:proof_of_stoker_type}

%
Here we prove \cref{res:Stoker_type}, which is of primary importance to our proof of the weak stress-flex conjecture, as explained in \cref{sec:stress_flex}. In our current language it states that if all $\dot\theta_{\sigma_{d-2}}^{\sigma_d}=0$, then
\begin{equation}
\label{Stoker_type_eq}
\sum_{\sigma_{d-1}} \dot V_{\sigma_{d-1}}n_{\sigma_{d-1}}^{\sigma_d}=0.
\end{equation}
We will use the assumption that all $\dot\theta_{\sigma_{d-2}}^{\sigma_d}=0$ only in the very end of our proof, as we will use our computations from this subsection in the proof of the general case of \cref{res:SS} in the next subsection. 

%
Substitute~\eqref{eq:first} into $\sum_{\sigma_{d-1}} \dot V_{\sigma_{d-1}}n_{\sigma_{d-1}}^{\sigma_d}$ and obtain
\begin{multline*}
\sum_{\sigma_{d-1}} \dot V_{\sigma_{d-1}}n_{\sigma_{d-1}}^{\sigma_d}=\sum_{\sigma_{d-2}\subset \sigma_{d-1}}V_{\sigma_{d-2}}\dot h_{\sigma_{d-2}}^{\sigma_{d-1}} n_{\sigma_{d-1}}^{\sigma_{d}} 
\\+\dfrac{1}{(d-1)!} \sum_f\sum_{k=0}^{d-3} (k+1)   h_{f_0} \ldots h_{f_{k-1}} \left(h_{f_{k}} ^{f_{k+2}}\right)^2  \dot{\alpha}_{f_{k+2}} h_{f_{k+2}}  \ldots h_{f_{d-2}} {n}_{f_{d-1}} .
\end{multline*}

Denote the summands
\[T_1:=\sum_{\sigma_{d-2}\subset \sigma_{d-1}}V_{\sigma_{d-2}}\dot h_{\sigma_{d-2}}^{\sigma_{d-1}} n_{\sigma_{d-1}}^{\sigma_{d}},\]
\[T_2:=\dfrac{1}{(d-1)!}\sum_f\sum_{k=0}^{d-3} (k+1)   h_{f_0} \ldots h_{f_{k-1}} \left(h_{f_{k}} ^{f_{k+2}}\right)^2  \dot{\alpha}_{f_{k+2}} h_{f_{k+2}}  \ldots h_{f_{d-2}} {n}_{f_{d-1}} .\]

Let us deal with $T_1$. Given any $\sigma_{d-2}$, there exist exactly two facets $\sigma_{d-1}$ and $\sigma_{d-1}^*$ such that $\sigma_{d-2} \subset \sigma_{d-1}$ and $\sigma_{d-2} \subset \sigma_{d-1}^*$.
%
%
Pairing the summands this way and using~\eqref{kite}, we obtain 
\begin{equation*}
T_1=\sum_{\sigma_{d-2}}\dot\theta_{\sigma_{d-2}}^{\sigma_d}V_{\sigma_{d-2}} o_{\sigma_{d-2}}+\sum_{\sigma_{d-2}\subset \sigma_{d-1}}V_{\sigma_{d-2}} n_{\sigma_{d-2}}^{\sigma_{d-1}}\dot h_{\sigma_{d-1}}^{\sigma_{d}}.
\end{equation*}
The second sum is zero by applying the Minkowski balancing property, \cref{res:Minkowski_balancing}, to every $\sigma_{d-1}$. Hence,
\begin{equation}
\label{T1}
T_1=\sum_{\sigma_{d-2}}\dot\theta_{\sigma_{d-2}}^{\sigma_d}V_{\sigma_{d-2}} o_{\sigma_{d-2}}.
\end{equation}

Now it remains to handle $T_2$. For every $k$ with $0\leq k \leq d-2$ and every full flag $f$, there exists a unique other full flag $f^*$ coinciding with $f$ in all dimensions but $k+1$. By applying the Minkowski balancing property to the quadrilateral $o_{f_k}o_{f_{k+1}}o_{f_{k+1}^\ast}o_{f_{k+2}}$, we obtain
\[h_{f_{k}}n_{f_{k+1}}+h_{f_{k}^\ast}n_{f_{k+1}^\ast}=n_{f_{k}}h_{f_{k+1}}+n_{f_{k}^\ast}h_{f_{k+1}^\ast}.\]
From such pairings of flags, we deduce
\begin{equation}
\label{afterswing}
T_2= \dfrac{1}{(d-1)!}  \sum_f \sum_{k=0}^{d-3}(k+1) h_{f_0}  h_{f_1} \ldots h_{f_{k-1}} \left(h_{f_{k}} ^{f_{k+2}}\right)^2   \dot{\alpha}_{f_{k+2}} {n}_{f_{k+2}}  h_{f_{k+3}}    \ldots h_{f_{d-1}}.
\end{equation}

The Schläfli formula, \cref{res:schlafli}, says that for every $\sigma_{k+2}$ we have
\begin{equation*}
\label{schla}
\sum_{\sigma_k: \sigma_k \subset \sigma_{k+2}} V_{\sigma_k}\dot \theta_{\sigma_k}^{\sigma_{k+2}}=0.
\end{equation*}
Together with~\eqref{alphatheta} and~\eqref{volume}, it implies that for every $\sigma_{k+2}$ we obtain
\[
    \sum_{f^{k+2}: f^{k+2}_{k+2}=\sigma_{k+2}}
h_{f^{k+2}_0} \ldots h_{f^{k+2}_{k-1}}\dot\alpha_{f^{k+2}_{k+2}}=0.
\]

From the Pythagorean theorem for the triangle $o_{ f_{k}} o_{f_{k+2}}o_{f_{k+3}}$, we get 
\[
    \left(h_{f_{k}} ^{f_{k+2}}\right)^2=\left(h_{f_{k}} ^{f_{k+3}}\right)^2-\left(h_{f_{k+2}} ^{f_{k+3}}\right)^2.
\]
Using the last two equations in~\eqref{afterswing}, we have
\begin{equation}\label{eq:term2}
T_2=\dfrac{1}{(d-1)!} \sum_{f}\sum_{k=0}^{d-3}(k+1) h_{f_0} \ldots h_{f_{k-1}}\left(h_{f_{k}} ^{f_{k+3}}\right)^2 \dot{\alpha}_{f_{k+2}}    n_{f_{k+2}}h_{f_{k+3}}   \ldots h_{f_{d-1}}.
\end{equation}

Fix $k$ with $0\leq k \leq d-3$ and fix a 
%
%
%
partial flag $F$ that has a face in every dimension but $k+1$ and $k+2$. Notice that \eqref{glucklink} and the pairings of flags imply that
\begin{equation}
\label{gluckflag}
\sum_{f: F \subset f}  \dot{\alpha} _{f_{k+2}}{n}_{f_{k+2}}  =\sum_{f: F \subset f} {n}_{f_{k}}\dot{\alpha}_{f_{k+3}}  .
\end{equation} 
We apply~\eqref{gluckflag} in~\eqref{eq:term2} and obtain
\begin{equation}\label{term3}
T_2=\dfrac{1}{(d-1)!} \sum_{f}\sum_{k=0}^{d-3}(k+1) h_{f_0} \ldots h_{f_{k-1}}\left(h_{f_{k}} ^{f_{k+3}}\right)^2 n_{f_{k}}\dot{\alpha}_{f_{k+3}}    h_{f_{k+3}}   \ldots h_{f_{d-1}}.
\end{equation}

The Minkowski balancing property says that for every $\sigma_{k+1}$ we have
\begin{equation*}
\sum_{f^{k+1}: f^{k+1}_{k+1}=\sigma_{k+1}}
h_{f^{k+1}_0} \ldots h_{f^{k+1}_{k-1}}n_{f_k}=0.
\end{equation*}

From the Pythagorean theorem for the triangle $o_{ f_{k}} o_{f_{k+1}}o_{f_{k+3}}$, we get 
\[
    \left(h_{f_{k}} ^{f_{k+3}}\right)^2=\left(h_{f_{k}} \right)^2+\left(h_{f_{k+1}}^{f_{k+3}}\right)^2.
\]
Using the last two equations in~\eqref{term3}, we get
\begin{equation}\label{term4}
T_2=\dfrac{1}{(d-1)!} \sum_{f}\sum_{k=0}^{d-3}(k+1) h_{f_0} \ldots h_{f_{k-1}}\left(h_{f_{k}}\right)^2     n_{f_{k}}\dot{\alpha}_{f_{k+3}}h_{f_{k+3}}   \ldots h_{f_{d-1}}.
\end{equation}

For a full flag $f$ and $k$ with $0\leq k\leq d-1$, we have $\langle {n}_{f_{k}}, {o}_{f_{k+1}}\rangle=0$. For any $l\leq k$, $o_{f_l}$ belongs to the affine span of $f_k$. Hence, it belongs to the hyperplane orthogonal to $n_{f_k}$ at the oriented distance $h_{f_k}$ from $o_{f_{k+1}}$. This implies that for any $l\leq k$,
\begin{equation}
\label{scalar}
h_{f_{k}}=\langle {n}_{f_{k}}, {o}_{f_{l}}-{o}_{f_{k+1}}\rangle=\langle n_{f_{k}}, o_{f_{l}} \rangle.
\end{equation}

Now fix $k$ with $0\leq k \leq d-4$ and fix a partial flag $F$ that has a face in every dimension but $k+2$ and $k+3$.
Combining~\eqref{scalar} and~\eqref{gluckflag}, we obtain 
\begin{multline*}
\sum_{f: F \subset f}\dot\alpha_{f_{k+3}} h_{f_{k+3}}=\sum_{f: F \subset f}\langle \dot\alpha_{f_{k+3}} n_{f_{k+3}}, o_{f_{k+1}} \rangle =\\
=\sum_{f: F \subset f} \langle {n}_{f_{k+1}}\dot{\alpha}_{f_{k+4}}, o_{f_{k+1}} \rangle = \sum_{f: F \subset f}h_{f_{k+1}}\dot\alpha_{f_{k+4}}.     
\end{multline*}
Note that here we use that $f_{k+1}$ is the same for all $f$ containing $F$.

We apply it in~\eqref{term4} and get
\begin{equation}\label{T2}
T_2=\dfrac{1}{(d-1)!} \sum_{f}\sum_{k=0}^{d-3}(k+1) h_{f_0} \ldots h_{f_{k-1}}\left(h_{f_{k}}\right)^2     n_{f_{k}}h_{f_{k+1}}   \ldots h_{f_{d-3}}\dot{\alpha}_{f_{d}}.
\end{equation}

Because of \eqref{alphatheta}, we see that if for all $\sigma_{d-2}$ our first-order deformation satisfies $\dot\theta_{\sigma_{d-2}}^{\sigma_d}=0$, then $T_2=0$. Additionally, from \eqref{T1}, in such case also $T_1=0$. This finishes the proof of \eqref{Stoker_type_eq}.

\subsection{Proof of \cref{res:SS*}}

Now we deal with the general case. To this purpose, we unpack the right-hand side of~\eqref{mains}. 

For us, the \Def{barycenter} of a finite point set $p_1, \ldots, p_n \in \RR^d$ is the vector $\frac{1}{n}\sum_{i=0}^n p_i$. The barycenter of a polytope is the barycenter of its vertex set.

For $l$ with $0\leq l \leq d$ and an $l$-order flag $f^l$, denote by $b_{f^l}$ the barycenter of the simplex spanned by $o_{f^l_0}, \ldots, o_{f^l_l}$. Note that $b_{f^l}:=\frac{1}{l+1}\sum_{k=0}^l o_{f_k^l}$. Next, for $k$ with $0\leq k \leq l-1$, \[o_{f_k^l}-o_{f_l^l}=\sum_{m=k}^{l-1}(o_{f_m^l}-o_{f_{m+1}^l})=\sum_{m=k}^{l-1} h_{f_m^l}n_{f_m^l}.\] Hence,
\begin{equation*}
b_{f^l}-o_{f_l^l}=\frac{1}{l+1}\sum_{k=0}^{l-1} (k+1)h_{f_k^l}n_{f_k^l}.
\end{equation*}

Let $P_1$ and $P_2$ be two full-dimensional polytopes in $\RR^d$ with disjoint interiors, $V_1$ and $V_2$ be their volumes, $b_1$ and $b_2$ be their barycenters and $b$ be the 
barycenter of their union. By the basic property of barycenters,
\[(V_1+V_2)b=V_1b_1+V_2b_2.\]

In turn, this means that for every face $\sigma_l$, we have
\begin{multline*}
V_{\sigma_l}(b_{\sigma_l}-o_{\sigma_l})=\sum_{f^l: f_l^l=\sigma_l} V_{f^l} (b_{f^l}-o_{\sigma_l})=\\
=\frac{1}{l!}\sum_{f^l: f_l^l=\sigma_l} h_{f_0^l}\ldots h_{f_{l-1}^l} \frac{1}{l+1}\sum_{k=0}^{l-1} (k+1) h_{f_k^l} n_{f_k^l}=\\
=\frac{1}{(l+1)!}\sum_{f^l: f_l^l=\sigma_l}\sum_{k=0}^{l-1}(k+1) h_{f_0^l}\ldots h_{f_{k-1}^l}(h_{f_k^l})^2n_{f_k^l} h_{f_{k+1}^l}\ldots h_{f_{l-1}^l}.
\end{multline*}

By applying this to every $\sigma_{d-2}$, from~\eqref{T2}, we conclude that
\[T_2=\sum_{\sigma_{d-2}}\dot\theta_{\sigma_{d-2}}^{\sigma_d} V_{\sigma_{d-2}}(b_{\sigma_{d-2}}-o_{\sigma_{d-2}}).\]
Together with~\eqref{T1}, this shows that
\[T_1+T_2=\sum_{\sigma_{d-2}}\dot\theta_{\sigma_{d-2}}^{\sigma_d} V_{\sigma_{d-2}}b_{\sigma_{d-2}},\]
which finishes the proof.

\tempnewpage

\section{Conclusion and outlook}
\label{sec:outlook}

The three main achievements of this article have been
\begin{itemize}
    \item a proof that coned polytope frameworks are second-order rigid (and in fact, prestress stable) (\cref{res:prestress_stable}, proven in \cref{sec:stress_flex_to_CPF}),
    \item a resolution of the weak stress-flex conjecture (\cref{res:weak_stress_flex}, proven in \cref{sec:SS}), and
    \item a new, self-contained and completely discrete-geometric  proof of the vector-valued Schläfli formula by Schlenker and Souam (\cref{res:SS*}, proven in \cref{sec:Stoker_type}).
\end{itemize}
We found it remarkable that these three topics ended up so tightly related: prestress stability of a CPF is a direct consequence of the weak stress flex conjecture, which, in turn, is dual to a special case of the vector-valued Schläfli formula.

As mentioned before, the version of the stress-flex conjecture resolved here was merely the fragment required to prove the second-order rigidity result.
Experiments suggest that a version of it holds in far greater generality:

\begin{conjecture}[Strong stress-flex conjecture]
    \label{conj:stress_flex}
    \label{conj:strong_stress_flex}
    Let $(G_S^\star, \bs p)$ be the coned framework of a closed piecewise linear (PL) surface $S$ (\ie\ the vertices and edges of the framework are the vertices and edges of the surface $S$; and there is an additional cone point $p_*$ together with cone edges connecting it to the vertices of $S$).
    If $\dotbs p$ is any first-order flex with $\dot p_\star=0$, and $\bs\omega$ is any stress of $(G_S^\star, \bs p)$, then
    $$\sum_i \omega_{\star i} \dot p_i=0.$$
\end{conjecture}

This formulation generalizes the weak stress-flex conjecture in three ways.
First, the convex polytope is replaced by a general PL surface. This surface might be~non-convex, of higher genus, self-intersecting or even non-orientable.
Second, the cone point might lie outside the region enclosed by the surface (if there is a well-defined such region at all).
Third, the Wachspress stress is replaced by an arbitrary stress of the coned framework.
Below we give some comments on these generalizations.

\subsection{General surfaces}
\label{sec:non_convex}

A version of the Wachspress stress exists for any closed oriented PL surface (making only mild non-degeneracy assumptions).
The Wachs\-press stress as defined in \cref{sec:WI_stress} is based on polar duality.
Hence, a suitable notion of polarity for general PL surfaces is required.
Likewise, the proof of \cref{res:Stoker_type} can be extended to cover general PL surfaces.
In fact, Schlenker and Souam already comment on this in \cite[Section 6]{SS2}, though not quite in the generality we comment on here.
Once provided, these ideas already suffice to prove \cref{conj:stress_flex} for~general closed oriented PL surfaces at least in the case of the Wachspress stress.

The details of what we describe above are technical, and will be part of a future publication.

\subsection{General stresses}
\label{sec:general_stresses}

Our proof of the weak stress-flex conjecture relies heavily on the precise form of the Wachspress stress and does not, in any obvious way,\nls generalize to general stresses of coned frameworks.
This is already true if the surface~$S$ is a convex polytope.

The Wachspress stress is the only stress of a \textit{simple} PL surface, but other stresses emerge once we add more edges. 
While adding edges, the number of first-order~flexes decreases. 
There exists a sweet-spot where we have both first-order flexes but~also multiple stresses.
These constitute the difficult instances of \cref{conj:stress_flex}.

\begin{figure}
    \centering
    \includegraphics[width=0.23\textwidth]{img/sub_surface.pdf}
    \caption{A convex polytope $P$ and a highlighted subset $P'$ that is a polytope in itself. The edges of $P'$ are a subset of the edges of $P$.}
    \label{fig:subsurface}
\end{figure}

One idea is trying to understand a general stress of a CPF (or a general coned PL surface) as a combination of
\begin{myenumerate}
    \item the Wachspress stress,
    \item Wachspress stresses of suitable sub-surfaces, and
    \item generic stresses.
\end{myenumerate}

An example of \itm2 is given in \cref{fig:subsurface}.
The highlighted part $P'$ of the depicted polytope $P$ forms a polytope in itself. Crucially, all edges of $P'$ are already edges of $P$, and hence, a cone over the 1-skeleton of $P$ carries both the Wachspress stress of $P$ and of $P'$ as separate stresses. 
Both satisfy stress-flex orthogonality.

Certain first-order flexes and stresses of $(G,\bs p)$ exist generically, in the sense that they are limits of first-order flexes or stresses in a sequence of generic frameworks that converges to $(G,\bs p)$.
The following argument was provided by Sean Dewar~and establishes stress-flex orthogonality between such generic stresses (in the sense of \itm3) and generic first-order flexes.

\begin{remark}[Generic stress-flex orthogonality]

Consider the \emph{measurement map}
$$f_G:\RR^{dV}\to\RR^E,\; \bs p\mapsto (\|p_j-p_i\|^2)_{ij\in E}.$$
If $\bs p$ is generic, then $f_G(\bs p)$ is a regular point of $\im(f_G)$.
This means that $\im(f_G)$ is a smooth manifold in a neighborhood $U\subset\RR^E$ of $f_G(\bs p)$.
In particular, the tangent space is well defined at $f_G(\bs p)$.
An elementary exercise shows that $\bs\omega\in\RR^E$ is a stress of $(G,\bs p)$ if and only if it is orthogonal to this tangent space of $\im(f_G)$, or equivalently, if $\bs\omega\T Df_G(\bs p)=0$.
If $(G,\bs q)$ is any framework equivalent to $(G,\bs p)$, then they define the same point $f_G(\bs p)=f_G(\bs q)$ in $\im(f_G)$, and hence have the same tangent space and therefore also the exact same stresses.

For a stress $\bs\omega$, consider
$$L_{\bs\omega}:=\{\bs q\in\RR^{dV}\mid \text{$\bs\omega$ is a stress of $\bs q$}\}.$$
We just argued that $\real(G,\bs p):=f_G^{-1}(f_G(\bs p))\subseteq L_{\bs \omega}$.
Also here, since $\bs p$ is generic, $\real(G,\bs p)$ is smooth in a neighborhood $W$ of $\bs p$ with a well-defined tangent space.
Then also the tangent space lies in $L_{\bs\omega}$.
Once again, an elementary exercise shows that the elements of this tangent space are precisely the first-order flexes of $(G,\bs p)$.
Hence, if $\dotbs p$ is such a first-order flex, it is also in $L_{\bs\omega}$.
Reinterpreting $(G,\dotbs p)$ as a framework with stress $\bs\omega$, for each $i\in V$ we obtain
$$\sum_{j_j\sim i}\omega_{ij}(\dot p_j-\dot p_i)=0.$$
If we choose $i=\star$ and assume $\dot p_\star=0$, this becomes stress-flex orthogonality.

\end{remark}

\subsection{Other Schläfli-type formulas}

Another interesting question regarding the general content of the this article relates to our proof of \cref{res:SS} and to other Schläfli-type formulas. First, as we mentioned, in~\cite{SS2} Schlenker and Souam also prove the spherical and hyperbolic counterparts to \cref{res:SS}. It is of interest to understand, whether our approach can generalize to these settings using the quadric models of spherical and hyperbolic $d$-spaces.

Second, in~\cite{SS2} Schlenker and Souam additionally show higher-signature vector-valued generalizations of \cref{res:SS*} in all constant-curvature ambient geometries, which extend their earlier higher-signature scalar analogues of the Schläfli formula obtained in~\cite{SS}. It is of interest, whether our methods can be also applied to give new proofs of these results.

\bibliographystyle{abbrv}
\bibliography{literature}

@article{izmestiev2010colin,
  title={The {C}olin de {V}erdiere number and graphs of polytopes},
  author={Izmestiev, Ivan},
  journal={Israel Journal of Mathematics},
  volume={178},
  number={1},
  pages={427--444},
  year={2010},
  publisher={Springer}
}

@book{schneider2013convex,
  title={Convex bodies: the Brunn--Minkowski theory},
  author={Schneider, Rolf},
  volume={151},
  year={2013},
  publisher={Cambridge university press}
}

@article{winter2024rigidity,
  title={Rigidity, tensegrity, and reconstruction of polytopes under metric constraints},
  author={Winter, Martin},
  journal={International Mathematics Research Notices},
  volume={2024},
  number={9},
  pages={7721--7747},
  year={2024},
  publisher={Oxford University Press}
}

@misc{winter2024stressflex,
    title={The Stress-Flex Conjecture}, 
    author={Robert Connelly and Steven J. Gortler and Louis Theran and Martin Winter},
    year={2024},
    eprint={2404.15590},
    archivePrefix={arXiv},
    primaryClass={math.CO},
    url={https://arxiv.org/abs/2404.15590}
}

@article{connelly1996second,
  title={Second-order rigidity and prestress stability for tensegrity frameworks},
  author={Connelly, Robert and Whiteley, Walter},
  journal={SIAM Journal on Discrete Mathematics},
  volume={9},
  number={3},
  pages={453--491},
  year={1996},
  publisher={SIAM}
}

@incollection {AVS,
    AUTHOR = {Alekseevskij, D. V. and Vinberg, \`E.\ B. and Solodovnikov, A. S.},
     TITLE = {Geometry of spaces of constant curvature},
 BOOKTITLE = {Geometry, {II}},
     PAGES = {1--138},
 PUBLISHER = {Springer, Berlin},
      YEAR = {1993},
      ISBN = {3-540-52000-7},
   MRCLASS = {53C20 (22E40 57M50)},
  MRNUMBER = {1254932},
MRREVIEWER = {Alan\ W.\ Reid},
       DOI = {10.1007/978-3-662-02901-5\_1},
       URL = {https://doi.org/10.1007/978-3-662-02901-5_1},
}

@incollection {Mil,
    AUTHOR = {Milnor, John},
     TITLE = {The {S}chl\"afli differential equality},
 BOOKTITLE = {Collected papers. {V}ol. 1},
     PAGES = {281--295},
 PUBLISHER = {Publish or Perish, Inc., Houston, TX},
      YEAR = {1994},
}

@article {BI,
    AUTHOR = {Bobenko, Alexander I. and Izmestiev, Ivan},
     TITLE = {Alexandrov's theorem, weighted {D}elaunay triangulations, and
              mixed volumes},
   JOURNAL = {Ann. Inst. Fourier (Grenoble)},
  FJOURNAL = {Universit\'e{} de Grenoble. Annales de l'Institut Fourier},
    VOLUME = {58},
      YEAR = {2008},
    NUMBER = {2},
     PAGES = {447--505},
      ISSN = {0373-0956,1777-5310},
   MRCLASS = {52B10 (52A39 52C25)},
  MRNUMBER = {2410380},
MRREVIEWER = {Fran\c cois\ Fillastre},
       DOI = {10.5802/aif.2358},
       URL = {https://doi.org/10.5802/aif.2358},
}

@article {Pro,
    AUTHOR = {Prosanov, Roman},
     TITLE = {Rigidity of compact {F}uchsian manifolds with convex boundary},
   JOURNAL = {Int. Math. Res. Not. IMRN},
  FJOURNAL = {International Mathematics Research Notices. IMRN},
      YEAR = {2023},
    NUMBER = {3},
     PAGES = {1959--2094},
      ISSN = {1073-7928,1687-0247},
   MRCLASS = {57K32},
  MRNUMBER = {4565606},
MRREVIEWER = {Yasushi\ Yamashita},
       DOI = {10.1093/imrn/rnab270},
       URL = {https://doi.org/10.1093/imrn/rnab270},
}

@article {BPS,
    AUTHOR = {Bobenko, Alexander I. and Pinkall, Ulrich and Springborn,
              Boris A.},
     TITLE = {Discrete conformal maps and ideal hyperbolic polyhedra},
   JOURNAL = {Geom. Topol.},
  FJOURNAL = {Geometry \& Topology},
    VOLUME = {19},
      YEAR = {2015},
    NUMBER = {4},
     PAGES = {2155--2215},
      ISSN = {1465-3060,1364-0380},
   MRCLASS = {52C26 (57M50)},
  MRNUMBER = {3375525},
MRREVIEWER = {Gunter\ Semmler},
       DOI = {10.2140/gt.2015.19.2155},
       URL = {https://doi.org/10.2140/gt.2015.19.2155},
}

@article {IPW,
    AUTHOR = {Izmestiev, Ivan and Prosanov, Roman and Wu, Tianqi},
     TITLE = {Prescribed curvature problem for discrete conformality on
              convex spherical cone-metrics},
   JOURNAL = {Adv. Math.},
  FJOURNAL = {Advances in Mathematics},
    VOLUME = {437},
      YEAR = {2024},
     PAGES = {Paper No. 109439, 35},
      ISSN = {0001-8708,1090-2082},
   MRCLASS = {30F45 (52B10 52B70 52C26)},
  MRNUMBER = {4674859},
MRREVIEWER = {Byung-Geun\ Oh},
       DOI = {10.1016/j.aim.2023.109439},
       URL = {https://doi.org/10.1016/j.aim.2023.109439},
}

@article{Riv,
    AUTHOR = {Rivin, Igor},
     TITLE = {Euclidean structures on simplicial surfaces and hyperbolic
              volume},
   JOURNAL = {Ann. of Math. (2)},
  FJOURNAL = {Annals of Mathematics. Second Series},
    VOLUME = {139},
      YEAR = {1994},
    NUMBER = {3},
     PAGES = {553--580},
      ISSN = {0003-486X,1939-8980},
   MRCLASS = {57M50 (57Q99)},
  MRNUMBER = {1283870},
       DOI = {10.2307/2118572},
       URL = {https://doi.org/10.2307/2118572},
}

@article {BLP,
    AUTHOR = {Boileau, Michel and Leeb, Bernhard and Porti, Joan},
     TITLE = {Geometrization of 3-dimensional orbifolds},
   JOURNAL = {Ann. of Math. (2)},
  FJOURNAL = {Annals of Mathematics. Second Series},
    VOLUME = {162},
      YEAR = {2005},
    NUMBER = {1},
     PAGES = {195--290},
      ISSN = {0003-486X,1939-8980},
   MRCLASS = {57M50 (53C23 57N10)},
  MRNUMBER = {2178962},
MRREVIEWER = {Darryl\ McCullough},
       DOI = {10.4007/annals.2005.162.195},
       URL = {https://doi.org/10.4007/annals.2005.162.195},
}

@article {Luo,
    AUTHOR = {Luo, Feng},
     TITLE = {Volume and angle structures on 3-manifolds},
   JOURNAL = {Asian J. Math.},
  FJOURNAL = {Asian Journal of Mathematics},
    VOLUME = {11},
      YEAR = {2007},
    NUMBER = {4},
     PAGES = {555--566},
      ISSN = {1093-6106,1945-0036},
   MRCLASS = {57M50 (57M30 57N10)},
  MRNUMBER = {2402938},
MRREVIEWER = {Bruno\ P.\ Zimmermann},
       DOI = {10.4310/AJM.2007.v11.n4.a2},
       URL = {https://doi.org/10.4310/AJM.2007.v11.n4.a2},
}

@article {SS,
    AUTHOR = {Schlenker, Jean-Marc and Souam, Rabah},
     TITLE = {Higher {S}chl\"afli formulas and applications},
   JOURNAL = {Compositio Math.},
  FJOURNAL = {Compositio Mathematica},
    VOLUME = {135},
      YEAR = {2003},
    NUMBER = {1},
     PAGES = {1--24},
      ISSN = {0010-437X,1570-5846},
   MRCLASS = {53A07 (52B11)},
  MRNUMBER = {1955161},
MRREVIEWER = {Igor\ Rivin},
       DOI = {10.1023/A:1021787011407},
       URL = {https://doi.org/10.1023/A:1021787011407},
}

@article {SS2,
    AUTHOR = {Schlenker, Jean-Marc and Souam, Rabah},
     TITLE = {Higher {S}chl\"afli formulas and applications. {II}.
              {V}ector-valued differential relations},
   JOURNAL = {Int. Math. Res. Not. IMRN},
  FJOURNAL = {International Mathematics Research Notices. IMRN},
      YEAR = {2008},
     PAGES = {Art. ID rnn 068, 44},
      ISSN = {1073-7928,1687-0247},
   MRCLASS = {53A07 (52B11 52B70)},
  MRNUMBER = {2439567},
MRREVIEWER = {Jesse\ Ratzkin},
       DOI = {10.1093/imrn/rnn068},
       URL = {https://doi.org/10.1093/imrn/rnn068},
}

@book {Ale,
    AUTHOR = {Alexandrov, A. D.},
     TITLE = {Convex polyhedra},
 PUBLISHER = {Springer-Verlag, Berlin},
      YEAR = {2005},
     PAGES = {xii+539},
      ISBN = {3-540-23158-7},
   MRCLASS = {52-02 (52Axx 52Bxx)},
  MRNUMBER = {2127379},
}

@incollection {Glu,
    AUTHOR = {Gluck, Herman},
     TITLE = {Almost all simply connected closed surfaces are rigid},
 BOOKTITLE = {Geometric topology ({P}roc. {C}onf., {P}ark {C}ity, {U}tah,
              1974)},
     PAGES = {225--239},
 PUBLISHER = {Springer, Berlin-New York},
      YEAR = {1975},
   MRCLASS = {57C05 (53C45 57A05)},
  MRNUMBER = {400239},
MRREVIEWER = {Robert\ Connelly},
}

@article {Sch,
    AUTHOR = {Schlenker, Jean-Marc},
     TITLE = {Small deformations of polygons and polyhedra},
   JOURNAL = {Trans. Amer. Math. Soc.},
  FJOURNAL = {Transactions of the American Mathematical Society},
    VOLUME = {359},
      YEAR = {2007},
    NUMBER = {5},
     PAGES = {2155--2189},
      ISSN = {0002-9947,1088-6850},
   MRCLASS = {58D29 (51M16 52B99)},
  MRNUMBER = {2276616},
MRREVIEWER = {Igor\ Rivin},
       DOI = {10.1090/S0002-9947-06-04172-9},
       URL = {https://doi.org/10.1090/S0002-9947-06-04172-9},
}

@article {GBKKRS,
    AUTHOR = {Grosse-Brauckmann, Karsten and Korevaar, Nicholas J. and
              Kusner, Robert B. and Ratzkin, Jesse and Sullivan, John M.},
     TITLE = {Coplanar {$k$}-unduloids are nondegenerate},
   JOURNAL = {Int. Math. Res. Not. IMRN},
  FJOURNAL = {International Mathematics Research Notices. IMRN},
      YEAR = {2009},
    NUMBER = {18},
     PAGES = {3391--3416},
      ISSN = {1073-7928,1687-0247},
   MRCLASS = {53A10},
  MRNUMBER = {2535004},
MRREVIEWER = {Rafael\ L\'opez},
       DOI = {10.1093/imrn/rnp058},
       URL = {https://doi.org/10.1093/imrn/rnp058},
}

@article{asimow1978rigidity,
  title={The rigidity of graphs},
  author={Asimow, Leonard and Roth, Ben},
  journal={Transactions of the American Mathematical Society},
  volume={245},
  pages={279--289},
  year={1978}
}

@article{connelly1980rigidity,
  title={The rigidity of certain cabled frameworks and the second-order rigidity of arbitrarily triangulated convex surfaces},
  author={Connelly, Robert},
  journal={Advances in Mathematics},
  volume={37},
  number={3},
  pages={272--299},
  year={1980},
  publisher={Academic Press}
}

@article{connelly2017prestress,
  title={Prestress stability of triangulated convex polytopes and universal second-order rigidity},
  author={Connelly, Robert and Gortler, Steven J},
  journal={SIAM Journal on Discrete Mathematics},
  volume={31},
  number={4},
  pages={2735--2753},
  year={2017},
  publisher={SIAM}
}

@book{wachspress1975rational,
 author = {Wachspress, Eugene L.},
 title = {A rational finite element basis},
 year = {1975},
 publisher = {Elsevier, Amsterdam},
 language = {English},
 zbMATH = {3504364},
 Zbl = {0322.65001}
}

@article{warren1996barycentric,
  title={Barycentric coordinates for convex polytopes},
  author={Warren, Joe},
  journal={Advances in Computational Mathematics},
  volume={6},
  number={1},
  pages={97--108},
  year={1996},
  publisher={Springer}
}

@article{kohn2020projective,
  title={Projective geometry of {W}achspress coordinates},
  author={Kohn, Kathl{\'e}n and Ranestad, Kristian},
  journal={Foundations of Computational Mathematics},
  volume={20},
  pages={1135--1173},
  year={2020},
  publisher={Springer}
}

@article{irving2014geometry,
  title={Geometry of {W}achspress surfaces},
  author={Irving, Corey and Schenck, Hal},
  journal={Algebra \& Number Theory},
  volume={8},
  number={2},
  pages={369--396},
  year={2014},
  publisher={Mathematical Sciences Publishers}
}

@article{lovasz2001steinitz,
  title={{S}teinitz representations of polyhedra and the {C}olin de {V}erdiere number},
  author={Lov{\'a}sz, L{\'a}szl{\'o}},
  journal={Journal of Combinatorial Theory, Series B},
  volume={82},
  number={2},
  pages={223--236},
  year={2001},
  publisher={Elsevier}
}

@article{narayanan2021spectral,
 author = {Narayanan, Hariharan and Shah, Rikhav and Srivastava, Nikhil},
 title = {A spectral approach to polytope diameter},
 fjournal = {Discrete \& Computational Geometry},
 journal = {Discrete Comput. Geom.},
 issn = {0179-5376},
 volume = {72},
 number = {4},
 pages = {1647--1674},
 year = {2024},
 language = {English},
 doi = {10.1007/s00454-024-00636-y},
 zbMATH = {7948165},
 Zbl = {1552.60028}
}

@misc{winter2026secondorder,
      title={Deformations and second-order rigidity of polytopes}, 
      author={Matthias Adrian-Himmelmann and Martin Winter and Zhen Zhang},
      year={2026},
      eprint={2607.09252},
      archivePrefix={arXiv},
      primaryClass={math.CO},
      url={https://arxiv.org/abs/2607.09252}, 
}
\addresseshere

\newpage

\appendix

\section{Proof of \cref{res:gluck}}
\label{sec:gluck}

For convenience, we repeat the setup of \cref{res:gluck}:

%
\begin{theoremX}{\ref{res:gluck}}
Let $n_1, \ldots, n_k$ be an ordered $k$-tuple of points on $\S^2 \subset \RR^3$ with $n_i \neq \pm n_{i-1}$, $\alpha_i$ be the oriented angle $n_{i-1}n_in_{i+1}$ (the angle between spherical segments) with $\alpha_i \not\in\{ 0,\pm\pi\}$, $i\in\{1, ..., k\}$. Let $m_i$ be the dual point~to~the oriented segment $n_in_{i+1}$ and $\beta_i$ be the oriented angle $m_{i-1}m_i m_{i+1}$ (hence, $\beta_i$ is equal to the oriented length of $n_i n_{i+1}$). Then for a
first-order deformation of this configuration we get
\[\sum_i \dot\alpha_i n_i =\sum_i\dot\beta_i m_i.\]
\end{theoremX}

\begin{proof}
Here we call a first-order deformation \Def{trivial} if it arises from the restriction of a Killing field of $\S^2$ to $\{n_i\}$. We call two first-order deformations \Def{equivalent} if they differ by a trivial deformation. Given a first-order deformation, clearly it suffices to prove \cref{res:gluck} for any equivalent deformation.

An easy observation is that a first-order deformation of our configuration is determined up to equivalence by the first-order changes of the lengths of the segments $n_1 n_2, \ldots, n_{k-1} n_{k}$ (so, all but one) and by the first-order changes of the angles at $n_2, \ldots, n_{k-1}$ (so, all but two). 

Let 
\[\Phi: \RR^3 \rightarrow \mathfrak{so}(3)\]
be the cross-product isomorphism, where we perceive $\mathfrak{so}(3)$ as the space of the Killing fields on $\S^2$. Namely, we send $x \in \RR^3$ to the vector field $\Phi(x)$ on $\S^2 \subset \RR^3$ defined by $\Phi(x)(y):=y \times x$ for $y \in \S^2$, where $\times$ is the Euclidean cross product in $\RR^3$. Define $a_1$ to be the zero Killing field and define
\[a_i:=\sum_{j=1}^{i-1}\Phi(\dot\alpha_j n_j -\dot\beta_j m_j).\]
We claim that the first-order deformation $\{a_i(n_i)\}$ is equivalent to the initial first-order deformation. Indeed, 
\[a_{i+1}-a_i=\Phi(\dot\alpha_i n_i -\dot\beta_i m_i),~~~~~i=1,\ldots,n-1.\]
By evaluating $a_{i+1}-a_i$ at $n_{i+1}$, we see that this choice of vectors indeed induces the first-order change $\dot\beta_i$ on the length of $n_i n_{i+1}$, $i=1,\ldots,k-1$. By evaluating $a_{i+1}-a_i$ at $n_{i+1}$ and $a_{i-1}-a_i$ at $n_{i-1}$, we see that this choice of vectors indeed induces the first-order change $\dot\alpha_i$ on the angle $n_{i-1}n_i n_{i+1}$, $i=2, \ldots, k-1$. Hence, the chosen first-order deformation is indeed equivalent to the initial one. Now, by evaluating $a_1-a_k$ and evaluating $\Phi(\dot \alpha_k n_k-\dot\beta_k m_k)$ at $ n_1$ and $ n_k$, we see that they have the same values at $ n_1$ and $ n_k$, hence
\[a_1-a_k=\Phi(\dot \alpha_k n_k-\dot\beta_k m_k).\]
Substituting there $a_1=0$ and 
$a_k=\sum_{j=1}^{k-1}\Phi(\dot\alpha_j n_j -\dot\beta_j m_j),$
we get
\[\Phi\left(\sum_{j=1}^{k}\dot\alpha_j n_j -\dot\beta_j m_j\right)=0,\]
from which \cref{res:gluck} follows.
\end{proof}

\end{document}